\documentclass[10pt]{article}
\usepackage[utf8]{inputenc}
\usepackage[T1]{fontenc}
\usepackage{lmodern}
\usepackage[french,english]{babel}
\usepackage{amsmath}
\usepackage{amsthm}
\usepackage{amssymb}
\usepackage{mathrsfs}
\usepackage{geometry}
\usepackage{graphicx}
\usepackage{comment}
\usepackage{url}
\usepackage{titlesec}
\usepackage[all]{xy}
\usepackage{enumitem}
\usepackage{amsfonts,amsxtra}
\usepackage{stmaryrd}
\usepackage{tikz}
\usepackage[colorlinks=true]{hyperref}

\geometry{hscale=0.8,vscale=0.8,centering}
\newtheorem{df}{Definition}[section]
\newtheorem{rem}[df]{Remark}
\newtheorem{ex}[df]{Example}
\newtheorem{thm}[df]{Theorem}
\newtheorem{pp}[df]{Proposition}
\newtheorem{lm}[df]{Lemma}

\newtheorem{cor}[df]{Corollary}

\newtheorem{thm intro}{Theorem}


\def\su{\mathfrak{su}}

\def\sl{\mathfrak{sl}}

\def\ppp{\mathfrak{p}}

\def\ss{\mathfrak{s}}
\def\ll{\mathfrak{l}}

\def\l{\lambda}


\newcommand*\quot[2]{{^{\textstyle #1}\big/_{\textstyle #2}}}
\def\square{\quot{k^*}{k^{*2}}}

\def\ssquare{\quot{k^*}{k^*_{-1}}}
\def\PP{\mathfrak{P}}

\title{\Large{\textbf{Involutions of sl(2,k) and non-split, three-dimensional simple Lie algebras}}}
\author{Philippe Meyer\\
\normalsize	Mathematical Institute, University of
Oxford, Oxford, Oxfordshire OX2 6GG, UK\\
\normalsize \texttt{philippe.meyer@maths.ox.ac.uk}}
\date{\vspace{-5ex}}

\begin{document}
\maketitle
{\let\thefootnote\relax\footnote{{\textit{Key words}: three-dimensional simple Lie algebra, quaternion algebra, Cartan involution.}}}
{\let\thefootnote\relax\footnote{{\textit{2020 Mathematics Subject Classification}: 11E08, 11R52, 17B05.}}}

\begin{center}
\textbf{Abstract}
\end{center}
We give a process to construct non-split, three-dimensional simple Lie algebras from involutions of $\sl(2,k)$, where $k$ is a field of characteristic not two. Up to equivalence, non-split three-dimensional simple Lie algebras obtained in this way are parametrised by a subgroup of the Brauer group of $k$ and are characterised by the fact that their Killing form represents $-2$. Over local and global fields we re-express this condition in terms of Hilbert and Legendre Symbols and give examples of three-dimensional simple Lie algebras which can and cannot be obtained by this construction over the field of rationals.
\vspace{0.4cm}

\section{Introduction}

It is well-known that the non-split three-dimensional simple real Lie algebra $\su(2)$ can be constructed from $\sl(2,\mathbb{R})$ equipped with a Cartan involution. In this article, we generalise this process to fields $k$ of characteristic not two and obtain non-split three-dimensional simple Lie algebras from $\sl(2,k)$ equipped with a certain type of involution.\\

\vspace{-0.2cm}
To this end, in Section \ref{Section 3DLA construction}, we start from an involution $\sigma$ of $\sl(2,k)$ such that the linear map ${\rm ad}(x)$ is not diagonalisable for all fixed points $x$ of $\sigma$. We show in Theorem \ref{thm split pair -> non-split pair} that the Lie algebra obtained by our construction is non-split if and only if $K(x,x)$ is not a sum of two squares in $k$ for all fixed points $x$ of $\sigma$, where $K$ is the Killing form of $\sl(2,k)$. Three-dimensional simple Lie algebras which can be obtained by this construction are characterised by the fact that their Killing form is anisotropic and represents $-2$ (Proposition \ref{pp obtainable}).
\vspace{0.2cm}

In Section \ref{section generalities} we recall how to associate a quaternion algebra to a three-dimensional simple Lie algebra. Quaternion algebras do not define a subgroup of the Brauer group $B(k)$, however, the set of quaternion algebras associated to the non-split three-dimensional simple Lie algebras constructed from involutions of $\sl(2,k)$ does (Proposition \ref{pp subgroup of the Brauer group} and Theorem \ref{thm corresponces Brauer}). Finally in Section \ref{section local and global fields} we characterise which non-split three-dimensional simple Lie algebras are obtainable over local and global fields in terms of Hilbert and Legendre Symbols (Propositions \ref{critere local field} and \ref{thm global field}) and we give explicit rational three-dimensional simple Lie algebras which can and cannot be obtained by this construction in Example \ref{examples over Q}.

\section*{Acknowledgements}

The author wants to express his gratitude to Marcus J. Slupinski for his inspiring discussions, his encouragement and his expert advice.
\vspace{0.5cm}

\textit{Throughout this paper, the field $k$ is always of characteristic not two.}

\section{Generalities about three-dimensional simple Lie algebras} \label{section generalities}

In this section, we recall some results about three-dimensional simple Lie algebras (see \cite{Jacobson1958} or \cite{MALCOLMSON1992} for details) and describe their correspondence with quaternion algebras.

\begin{df}
Let $\lbrace x,y,z\rbrace$ be the canonical basis of $k^3$ and let $\alpha,\beta \in k^*$. Define an antisymmetric bilinear bracket $[\phantom{v},\phantom{v}] : k^3\times k^3\rightarrow k^3$ by 
$$[x,y]:=z, \qquad [y,z]:=\alpha x, \qquad [z,x]:=\beta y$$
and denote the algebra $(k^3,[\phantom{v},\phantom{v}])$ by $L(\alpha,\beta)$.
\end{df}

The algebra $L(\alpha,\beta)$ is a three-dimensional simple Lie algebra, its Killing form is $<-2\beta,-2\alpha,-2\alpha\beta>$ and we have the following result:

\begin{pp}
If $\ss$ is a three-dimensional simple Lie algebra, then there exist $\alpha,\beta \in k^*$ such that $\ss$ is isomorphic to $L(\alpha,\beta)$.
\end{pp}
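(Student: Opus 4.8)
The plan is to show that an arbitrary three-dimensional simple Lie algebra $\ss$ admits a basis in which the structure constants take the form defining $L(\alpha,\beta)$. The key geometric input is the adjoint representation together with the Killing form $K$ of $\ss$. Since $\ss$ is simple and three-dimensional (and $\mathrm{char}\,k\neq 2$), the Killing form $K$ is non-degenerate, and the adjoint action gives an isomorphism of $\ss$ onto $\so(K)\cong \mathfrak{so}_3$ acting on the three-dimensional quadratic space $(\ss,K)$; equivalently, for each $u\in\ss$ the operator $\mathrm{ad}(u)$ is skew-symmetric with respect to $K$. Concretely, this means that for $u,v,w\in\ss$ the scalar $K([u,v],w)$ is totally antisymmetric in $u,v,w$, so $[u,v]$ is determined (via $K$) by a fixed alternating trilinear form, i.e.\ essentially a ``cross product'' attached to $K$.

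\textbf{Main steps.} First I would pick any nonzero $x\in\ss$ and, using non-degeneracy of $K$, arrange that $K(x,x)=:c_1\neq 0$ (a simple Lie algebra of this dimension has no nonzero element $u$ with $K(u,u)=0$ that one is forced to use as a basis vector — more precisely, one can always choose an orthogonal basis since $K$ is non-degenerate and $\mathrm{char}\,k\neq 2$). Take $y$ in the $K$-orthogonal complement of $x$ with $K(y,y)=:c_2\neq 0$, and set $z:=[x,y]$. The antisymmetry of $K([\,\cdot\,,\cdot\,],\cdot\,)$ gives $K(z,x)=K([x,y],x)=0$ and $K(z,y)=K([x,y],y)=0$, so $z$ is $K$-orthogonal to both $x$ and $y$; since $K$ is non-degenerate on the three-dimensional space this forces $\{x,y,z\}$ to be a basis (in particular $z\neq 0$, using simplicity: the centraliser of the plane $kx+ky$ cannot be all of $\ss$). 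Next, expand $[y,z]$ and $[z,x]$ in this basis. By the same orthogonality trick, $K([y,z],y)=0$ and $K([y,z],z)=K([y,z],[x,y])=K([[y,z],x],y)\cdot(\pm1)$-type manipulations, or more directly $K([y,z],z)= -K([y,[x,y]],z)$; the cleanest route is to note $[y,z]$ is $K$-orthogonal to $y$ and to $z$ (the latter because $K([y,z],z)$ is alternating with a repeated $z$), hence $[y,z]=\alpha x$ for some scalar $\alpha$, and similarly $[z,x]=\beta y$ for some scalar $\beta$. Finally I would verify $\alpha,\beta\in k^*$: if $\alpha=0$ then $x$ would be central, contradicting simplicity, and likewise for $\beta$.

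\textbf{Matching the Killing form and checking consistency.} With $[x,y]=z$, $[y,z]=\alpha x$, $[z,x]=\beta y$ in hand, the algebra is by definition $L(\alpha,\beta)$, so $\ss\cong L(\alpha,\beta)$ as desired. One should check the Jacobi identity is automatic here (it holds because $\ss$ was assumed to be a Lie algebra, so no separate verification is needed), and one may cross-check against the stated Killing form $\langle -2\beta,-2\alpha,-2\alpha\beta\rangle$: computing $K(x,x)$, $K(y,y)$, $K(z,z)$ from the brackets recovers $-2\beta c$-type expressions, which is a useful sanity check but not logically required.

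\textbf{Expected main obstacle.} The one genuinely non-routine point is guaranteeing that the chosen vectors actually form a basis — i.e.\ that $z=[x,y]\neq 0$ and that $\alpha,\beta$ are nonzero. Over $\mathbb{R}$ or $\mathbb{C}$ one leans on the cross-product picture, but over a general field I expect one must invoke simplicity carefully at each step: the failure of any of these non-vanishing claims produces a nonzero proper ideal (a central element, or an invariant plane). So the heart of the argument is translating ``$\mathrm{ad}$ is skew-adjoint for $K$ and $\ss$ is simple'' into the three successive non-degeneracy facts; everything else is bookkeeping with the alternating trilinear form. An alternative, slightly slicker organisation avoids some of this by first showing $\mathrm{ad}:\ss\to\so(\ss,K)$ is an isomorphism (dimension count plus simplicity kills the kernel) and then transporting the standard $\mathfrak{so}_3$-structure constants through an orthogonal basis; I would mention this as the conceptual version but carry out the elementary basis-chasing version above for self-containedness.
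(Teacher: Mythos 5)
Your overall strategy --- diagonalise the Killing form $K$, use that $K([u,v],w)$ is alternating to force $[x,y]$, $[y,z]$, $[z,x]$ into the appropriate orthogonal complements --- is sound and does lead to a correct proof. Note that the paper itself gives no proof of this proposition (it defers to Jacobson and Malcolmson, whose classical argument diagonalises the symmetric bilinear form obtained from the bijection $\Lambda^{2}\ss\to\ss$ and a choice of volume form, and never invokes the Killing form), so your route is a genuine alternative. But as written it has one real gap: you assert at the outset that $K$ is non-degenerate ``since $\ss$ is simple and three-dimensional''. Over a general field of characteristic $\neq 2$ this is precisely the point that needs proof --- Cartan's criterion is a characteristic-zero statement, and in positive characteristic there exist simple Lie algebras whose Killing form vanishes identically. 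Since every later step (orthogonal basis, orthogonal complements of planes being lines) rests on it, you must supply an argument. One that fits the paper: $[\ss,\ss]$ is a nonzero ideal, hence equals $\ss$, so every element is a sum of brackets and every $\mathrm{ad}(x)$ has trace zero; the characteristic polynomial of $\mathrm{ad}(x)$ is then $-X\bigl(X^{2}-\tfrac{K(x,x)}{2}\bigr)$ (Proposition \ref{pp poly car and isotropic}), so if $K\equiv 0$ every $\mathrm{ad}(x)$ would be nilpotent and Engel's theorem would make $\ss$ nilpotent, a contradiction; the radical of $K$ is therefore a proper ideal, hence zero.

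Two of your local justifications are also wrong as stated, although the claims they support are true. Saying that ``the centraliser of the plane $kx+ky$ cannot be all of $\ss$'' does not rule out $[x,y]=0$, which only says that $x$ and $y$ centralise each other; and ``if $\alpha=0$ then $x$ would be central'' is incorrect, since $\alpha=0$ means $[y,z]=0$ and says nothing about $\mathrm{ad}(x)$ --- what it actually produces is the two-dimensional ideal $ky+kz$ (because $[x,y]=z$ and $[x,z]=-\beta y$ lie in it). All three non-vanishing claims ($z\neq 0$, $\alpha\neq 0$, $\beta\neq 0$) follow uniformly from one observation you should make explicit: since $[\ss,\ss]=\ss$, the bracket $\Lambda^{2}\ss\to\ss$ is a surjection between three-dimensional spaces and therefore injective, so $[u,v]\neq 0$ whenever $u$ and $v$ are linearly independent. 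With the non-degeneracy of $K$ and this injectivity inserted, your proof is complete.
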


\begin{rem}
A ternary quadratic form $q$ is isometric to the Killing form of a three-dimensional simple Lie algebra if and only if $disc(q)=[-2]\in\square$.
\end{rem}

The Lie algebra $\sl(2,k)$ is isomorphic to $L(-1,1)$. We say that a three-dimensional simple Lie algebra $\ss$ is split if it is isomorphic to $\sl(2,k)$. If there exists a non-zero $h\in \ss$ such that ${\rm ad}(h)$ is diagonalisable then $\ss$ is split.

\begin{pp} \label{pp poly car and isotropic}
Let $\ss$ be a three-dimensional simple Lie algebra, $K$ be its Killing form and $h \in \ss$. The characteristic polynomial of ${\rm ad}(h)$ is $-X(X^2-\frac{K(h,h)}{2})$. In particular, ${\rm ad}(h)$ is diagonalisable if and only if $\frac{K(h,h)}{2}$ is a non-zero square in $k$.
\end{pp}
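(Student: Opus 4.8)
The plan is to work directly with the model $L(\alpha,\beta)$, using the Proposition quoted just before the statement: any three-dimensional simple Lie algebra $\ss$ is isomorphic to some $L(\alpha,\beta)$, and the excerpt already records that the Killing form of $L(\alpha,\beta)$ is $\langle -2\beta,-2\alpha,-2\alpha\beta\rangle$ in the basis $\{x,y,z\}$. So first I would reduce to the case $\ss=L(\alpha,\beta)$ and pick an arbitrary $h=ax+by+cz$.

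Next I would compute the matrix of ${\rm ad}(h)$ in the basis $\{x,y,z\}$ using only the structure constants $[x,y]=z$, $[y,z]=\alpha x$, $[z,x]=\beta y$. This is the one genuine calculation: writing $[h,x]$, $[h,y]$, $[h,z]$ in terms of $x,y,z$ gives an explicit $3\times 3$ matrix whose entries are linear in $a,b,c$ (with coefficients involving $\alpha,\beta$). One then expands the characteristic polynomial $\det(X I-{\rm ad}(h))$. The key structural fact is that $h$ itself is always in the kernel of ${\rm ad}(h)$ (since $[h,h]=0$), so $X$ divides the characteristic polynomial and it has the form $-X(X^2+pX+q)$; moreover the trace of ${\rm ad}(h)$ is $0$ (immediate from the matrix, or from the fact that $\ss$ is simple hence unimodular), so $p=0$ and the polynomial is $-X(X^2-t)$ for some scalar $t$ depending on $a,b,c,\alpha,\beta$. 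Computing $t$ (it will come out as something like $-b^2\alpha\beta-c^2\alpha-\dots$, i.e. $-(a^2\beta\alpha+\cdots)$ up to sign) and comparing with $\tfrac12 K(h,h)=\tfrac12\langle -2\beta,-2\alpha,-2\alpha\beta\rangle(h)=-\beta a^2-\alpha b^2-\alpha\beta c^2$, one checks $t=\tfrac{K(h,h)}{2}$, giving the characteristic polynomial $-X(X^2-\tfrac{K(h,h)}{2})$.

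For the diagonalisability criterion: ${\rm ad}(h)$ is diagonalisable over $k$ iff its minimal polynomial splits over $k$ with distinct roots. From the characteristic polynomial $-X(X^2-\tfrac{K(h,h)}{2})$: if $\tfrac{K(h,h)}{2}$ is a non-zero square, say $=\mu^2$ with $\mu\in k^*$, the eigenvalues are $0,\mu,-\mu$, three distinct elements of $k$, so ${\rm ad}(h)$ is diagonalisable. Conversely, if ${\rm ad}(h)$ is diagonalisable then its eigenvalues lie in $k$; the nonzero ones are $\pm\mu$ with $\mu^2=\tfrac{K(h,h)}{2}$, so $\tfrac{K(h,h)}{2}$ is a square, and it is nonzero because otherwise the characteristic polynomial is $-X^3$ while ${\rm ad}(h)\neq 0$ (as $\ss$ is simple and $h\neq 0$, so ${\rm ad}$ is injective), contradicting diagonalisability. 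This covers the ``In particular'' clause, with the small care that one must separately rule out the nilpotent case $K(h,h)=0$.

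I do not anticipate a serious obstacle; the only place to be careful is bookkeeping of signs in the characteristic-polynomial computation and the sign conventions in $\langle -2\beta,-2\alpha,-2\alpha\beta\rangle$, and the edge case $h=0$ (trivial) versus $K(h,h)=0$ with $h\neq 0$ in the diagonalisability statement. An alternative, more invariant route avoiding coordinates would be to use Corollary-type identities relating $[h,[h,\cdot]]$ to $K$ (the kind of cross-product identity in the commented-out part of the excerpt): one shows ${\rm ad}(h)^3=\tfrac{K(h,h)}{2}{\rm ad}(h)$ as operators, which immediately gives that the minimal polynomial of ${\rm ad}(h)$ divides $X(X^2-\tfrac{K(h,h)}{2})$ and pins down the characteristic polynomial by degree and trace considerations; but since that identity is only in a commented block, I would fall back on the explicit $L(\alpha,\beta)$ computation.
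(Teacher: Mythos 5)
Your proposal is correct and matches the paper, whose entire proof is the phrase ``Straightforward calculation'': reducing to $L(\alpha,\beta)$, writing the matrix of ${\rm ad}(h)$ for $h=ax+by+cz$, and reading off $-X\bigl(X^2+(a^2\beta+b^2\alpha+c^2\alpha\beta)\bigr)=-X\bigl(X^2-\tfrac{K(h,h)}{2}\bigr)$ is exactly the intended computation. Your handling of the diagonalisability clause (distinct eigenvalues $0,\pm\mu$ when $\tfrac{K(h,h)}{2}$ is a nonzero square, and injectivity of ${\rm ad}$ ruling out the nilpotent case for $h\neq 0$) is also the right argument.
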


\begin{proof}
Straightforward calculation.
\end{proof}

It is well-known that the imaginary part of a quaternion algebra $\mathcal{H}$ is a three-dimensional simple Lie algebra for the bracket defined by the commutator. Let $\alpha, \beta \in k^*$. Recall (\cite{Vigneras80},\cite{LAM}) that the quaternion algebra $\bigl(\begin{smallmatrix}
\alpha, \beta\\ \hline k
\end{smallmatrix} \bigr)$ is the $k$-algebra on two generators $i,j$ with the defining relations:
\begin{equation*}
i^2=\alpha, \qquad j^2=\beta, \qquad ij=-ji.
\end{equation*}
We note that the element $ij$ verifies $(ij)^2=-\alpha \beta$. Furthermore, $\lbrace 1,i,j,ij \rbrace$ form a $k$-basis for $\bigl(\begin{smallmatrix}
\alpha, \beta\\ \hline k
\end{smallmatrix} \bigr)$ and $\bigl(\begin{smallmatrix}
\alpha, \beta\\ \hline k
\end{smallmatrix} \bigr)$ is a central simple unital four-dimensional associative, non-commutative composition algebra for the norm form
$$N(a+bi+cj+dij)=(a+bi+cj+dij)\overline{(a+bi+cj+dij)}=a^2-\alpha b^2-\beta c^2+\alpha \beta d^2,$$
where $\overline{(a+bi+cj+dij)}:=a-(bi+cj+dij)$ for all $a+bi+cj+dij\in \bigl(\begin{smallmatrix}
\alpha, \beta\\ \hline k
\end{smallmatrix} \bigr)$.
\vspace{0.2cm}

The imaginary part of the quaternion algebra $\bigl(\begin{smallmatrix}
\alpha, \beta\\ \hline k
\end{smallmatrix} \bigr)$ together with the commutator is isomorphic to the Lie algebra $L(-\beta,-\alpha)$ since $Im\bigl(\begin{smallmatrix}
\alpha, \beta\\ \hline k
\end{smallmatrix} \bigr)={\rm Span}<i,j,ij>$ and 
$$[\frac{i}{2},\frac{j}{2}]=\frac{ij}{2}, \qquad [\frac{j}{2},\frac{ij}{2}]=-\beta \cdot \frac{i}{2}, \qquad [\frac{ij}{2},\frac{i}{2}]=-\alpha \cdot \frac{j}{2}.$$
Conversely, from a three-dimensional simple Lie algebra $\ss$ we can reconstruct a quaternion algebra as follows.
\begin{df}
Let $\ss$ be a three-dimensional simple Lie algebra, $K$ be its Killing form and $\mathcal{H}(\ss)$ be the vector space defined by $\mathcal{H}(\ss):=k\oplus \ss$. Define the product $~\cdot~ : \mathcal{H}(\ss) \times \mathcal{H}(\ss) \rightarrow \mathcal{H}(\ss)$ by:
\begin{enumerate}[label=\alph*)]
\item for $a,b \in k$, $a\cdot b:=ab$ (the field product on $k$) ;
\item for $a \in k$ and $v \in \ss$, $a\cdot v:=v\cdot a:=av$ (the scalar multiplication of $k$ on $\ss$) ;
\item for $v,w \in \ss$, 
$$v\cdot w:=\frac{K(v,w)}{8}\cdot 1+ \frac{[v,w]}{2}.$$
\end{enumerate}
\end{df}

\noindent
We define a norm form on $\mathcal{H}(\ss)$ by 
$$N(x):=x\cdot\overline{x}=(a^2-\frac{K(v,v)}{8})\cdot 1 \qquad \forall x=a\cdot 1+v \in \mathcal{H}(\ss),$$
where $\overline{x}:=a\cdot 1-v$.

\begin{pp} \label{pp quaternion from 3DSLA}
Let $\ss$ be a three-dimensional simple Lie algebra. The vector space $\mathcal{H}(\ss)=k\oplus\ss$ with the product $\cdot$ above is a quaternion algebra. Furthermore, if $\ss\cong L(\alpha,\beta)$ then $\mathcal{H}(\ss)\cong \bigl(\begin{smallmatrix}
-\beta, -\alpha\\ \hline k
\end{smallmatrix} \bigr)$.
\end{pp}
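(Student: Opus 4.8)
The plan is to verify directly that $(\mathcal{H}(\ss),\cdot)$ satisfies the axioms of a quaternion algebra by exhibiting appropriate generators, and then to match the structure constants with those of $\bigl(\begin{smallmatrix} -\beta,-\alpha\\ \hline k \end{smallmatrix}\bigr)$. First I would reduce to the case $\ss = L(\alpha,\beta)$: since every three-dimensional simple Lie algebra is isomorphic to some $L(\alpha,\beta)$ by the earlier Proposition, and the construction $\ss \mapsto \mathcal{H}(\ss)$ is manifestly functorial under Lie algebra isomorphisms (it is built only from $K$ and the bracket, both of which are preserved by isomorphisms), it suffices to compute $\mathcal{H}(L(\alpha,\beta))$ and identify it. This simultaneously proves both assertions of the proposition.

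Next I would carry out the computation on the basis $\{x,y,z\}$ of $L(\alpha,\beta)$, using that its Killing form is $\langle -2\beta, -2\alpha, -2\alpha\beta\rangle$, i.e. $K(x,x) = -2\beta$, $K(y,y) = -2\alpha$, $K(z,z) = -2\alpha\beta$, and $K$ vanishes on distinct basis vectors. Set $i := \tfrac{x}{2}$ and $j := \tfrac{y}{2}$ inside $\mathcal{H}(\ss)$. From the definition of the product, $i\cdot i = \tfrac14 x\cdot x = \tfrac14\big(\tfrac{K(x,x)}{8}\cdot 1 + \tfrac{[x,x]}{2}\big) = \tfrac{K(x,x)}{32} = -\tfrac{\beta}{16}$; hmm — here I should be careful about the normalisation, and the cleaner choice matching the displayed computation in the excerpt is to observe that the Lie bracket part already shows $\mathrm{Im}\,\mathcal{H}(\ss)$ with bracket $\tfrac12[\,\cdot\,,\,\cdot\,]$ reproduces the $L(\alpha,\beta)$-type relations, so I will instead check that with a suitable rescaling of $x,y$ the elements square to $-\beta$ and $-\alpha$ respectively, that they anticommute (which follows because for $v,w\in\ss$ one has $v\cdot w + w\cdot v = \tfrac{K(v,w)}{4}\cdot 1$, so distinct orthogonal basis vectors anticommute), and that $1$ is a two-sided unit (immediate from axioms a) and b)). Then $\{1, i, j, ij\}$ is a basis, associativity can be checked on generators, and the defining relations $i^2 = -\beta$, $j^2 = -\alpha$, $ij = -ji$ identify $\mathcal{H}(L(\alpha,\beta))$ with $\bigl(\begin{smallmatrix} -\beta,-\alpha\\ \hline k \end{smallmatrix}\bigr)$. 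Finally I would note the norm form $N(a\cdot 1 + v) = a^2 - \tfrac{K(v,v)}{8}$ restricts on $\mathrm{Span}\langle i,j,ij\rangle$ to $-\alpha b^2 - \beta c^2 + \alpha\beta d^2$ (up to the scaling chosen), consistent with the stated norm form of the quaternion algebra.

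The main obstacle is bookkeeping with the scaling factors: the factor $8$ in the Killing-form term and the factor $2$ in the bracket term must be tracked so that the squares of the generators come out to exactly $-\beta$ and $-\alpha$ rather than rational multiples thereof, and one must pick the generators as the correct scalar multiples of $x,y$ (the excerpt's displayed relations using $\tfrac{i}{2},\tfrac{j}{2}$ suggest the intended normalisation). The only genuinely non-formal point is \emph{associativity} of $\cdot$: since $\mathcal{H}(\ss)$ is four-dimensional and generated over $k$ by two anticommuting elements with scalar squares, associativity need only be verified on products of these generators, which reduces to a finite check using the bracket relations and the Jacobi identity in $\ss$ together with the invariance $K([u,v],w) = K(u,[v,w])$ of the Killing form; alternatively one can invoke that any unital $k$-algebra with basis $\{1,i,j,ij\}$ satisfying $i^2=\alpha', j^2=\beta', ij=-ji$ is automatically associative, so it is enough to verify the multiplication table matches these relations. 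I would present the argument in the ``reduce to $L(\alpha,\beta)$, compute on a basis, read off the relations'' form, leaving the explicit multiplication table as the routine verification.
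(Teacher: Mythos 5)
Your proposal is correct and, at bottom, does the same thing the paper does --- a direct verification --- but it organises the calculation differently, and the comparison is worth a word. The paper's proof is the single line ``straightforward calculation using the identity $K([v,w],[v,w])=\tfrac{1}{2}\left(K(v,w)^2-K(v,v)K(w,w)\right)$'', i.e.\ a basis-free check in which that cross-product identity is exactly what is needed to verify the composition property $N(u\cdot w)=N(u)N(w)$ (and hence that $\mathcal{H}(\ss)$ is a quaternion algebra) without ever choosing coordinates; the isomorphism type is then read off separately. You instead reduce to $\ss=L(\alpha,\beta)$ and compute the multiplication table on a basis, which has the advantage of proving both assertions at once and of importing associativity for free once the table is matched with that of $\bigl(\begin{smallmatrix}-\beta, -\alpha\\ \hline k\end{smallmatrix}\bigr)$. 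Two small points to tighten. First, the normalisation you leave open is $i:=2x$, $j:=2y$, $ij=2z$: then $i\cdot i=4\cdot\tfrac{K(x,x)}{8}=-\beta$, $j\cdot j=-\alpha$, $i\cdot j=4\cdot\tfrac{[x,y]}{2}=2z$ and $(2z)\cdot(2z)=-\alpha\beta$, consistent with the paper's display in which the \emph{Lie algebra} elements are $\tfrac{i}{2},\tfrac{j}{2},\tfrac{ij}{2}$. Second, your fallback ``any unital algebra with basis $\{1,i,j,ij\}$ satisfying $i^2=\alpha'$, $j^2=\beta'$, $ij=-ji$ is automatically associative'' is not quite enough as stated: those relations alone do not determine the products involving the fourth basis vector, so you must actually compute $i\cdot(ij)$, $(ij)\cdot i$, $j\cdot(ij)$, etc.\ from the definition (e.g.\ $i\cdot(ij)=4\,x\cdot z=-2[z,x]=-2\beta y=-\beta j$) and check they agree with the quaternion table; only then does associativity transfer. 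With that finite check spelled out, your argument is complete.
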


\begin{proof}
Straightforward calculation using the identity
$$K([v,w],[v,w])=\frac{1}{2} \left( K(v,w)^2-K(v,v)K(w,w)\right) \qquad \forall v,w\in \ss.$$
\end{proof}

\begin{ex}
\begin{enumerate}[label=\alph*)]
\item Over $\mathbb{R}$, the Lie algebra $\su(2)$ is the imaginary part of the classical quaternion algebra $\mathbb{H}$.
\item The Lie algebra $\sl(2,k)$ is the imaginary part of the split quaternion algebra $ M_2(k)\cong\bigl(\begin{smallmatrix}
1, -1\\ \hline k
\end{smallmatrix} \bigr)$.
\end{enumerate}
\end{ex}

The following is a résumé of the correspondence between three-dimensional simple Lie algebras, their Killing forms and their associated quaternion algebras.

\begin{pp} \label{equivalences quaternion alg}
For $\alpha,\beta,\alpha',\beta' \in k^*$ the following are equivalent:
\begin{enumerate}[label=\alph*)]
\item the Lie algebras $L(\alpha,\beta)$ and $L(\alpha',\beta')$ are isomorphic ;
\item the quaternion algebras $\bigl(\begin{smallmatrix}
-\alpha, -\beta\\ \hline k
\end{smallmatrix} \bigr)$ and $\bigl(\begin{smallmatrix}
-\alpha', -\beta'\\ \hline k
\end{smallmatrix} \bigr)$ are isomorphic ;
\item the quadratic forms $<\beta,\alpha,\alpha\beta>$ and $<\beta',\alpha',\alpha'\beta'>$ are isometric.
\end{enumerate}
\end{pp}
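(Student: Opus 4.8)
The plan is to prove the three equivalences by establishing a cycle of implications, using the structural results already assembled in this section. The cleanest route is to show (a) $\Rightarrow$ (b) $\Rightarrow$ (c) $\Rightarrow$ (a), relying on the functor $\ss \mapsto \mathcal{H}(\ss)$ from Proposition \ref{pp quaternion from 3DSLA}, the classical theory of quaternion algebras, and the description of the Killing form of $L(\alpha,\beta)$ as $\langle -2\beta,-2\alpha,-2\alpha\beta\rangle$ recalled just after the definition.

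For (a) $\Rightarrow$ (b): if $L(\alpha,\beta)\cong L(\alpha',\beta')$ as Lie algebras, then applying the construction $\ss\mapsto\mathcal{H}(\ss)$, which is manifestly canonical (it only uses the Lie bracket and the Killing form, both preserved by a Lie algebra isomorphism), yields an isomorphism $\mathcal{H}(L(\alpha,\beta))\cong\mathcal{H}(L(\alpha',\beta'))$ of quaternion algebras. By Proposition \ref{pp quaternion from 3DSLA}, $\mathcal{H}(L(\alpha,\beta))\cong\bigl(\begin{smallmatrix}-\beta,-\alpha\\ \hline k\end{smallmatrix}\bigr)$, and since $\bigl(\begin{smallmatrix}-\beta,-\alpha\\ \hline k\end{smallmatrix}\bigr)\cong\bigl(\begin{smallmatrix}-\alpha,-\beta\\ \hline k\end{smallmatrix}\bigr)$ (swapping the generators $i,j$), we get the isomorphism in (b). For (b) $\Rightarrow$ (c): two quaternion algebras over $k$ are isomorphic if and only if their norm forms are isometric, and more specifically if and only if their pure-quaternion subforms are isometric; the pure part of $\bigl(\begin{smallmatrix}-\alpha,-\beta\\ \hline k\end{smallmatrix}\bigr)$ is $\langle \alpha,\beta,\alpha\beta\rangle$ (from $i^2=-\alpha$, $j^2=-\beta$, $(ij)^2=-\alpha\beta$, so that $N$ restricted to imaginary elements is $\alpha b^2+\beta c^2+\alpha\beta d^2$). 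This is a standard fact in the theory of quaternion algebras (see \cite{Vigneras80} or \cite{LAM}), so I would simply cite it. For (c) $\Rightarrow$ (a): if $\langle\beta,\alpha,\alpha\beta\rangle\cong\langle\beta',\alpha',\alpha'\beta'\rangle$, then multiplying by $-2$ gives $\langle -2\beta,-2\alpha,-2\alpha\beta\rangle\cong\langle -2\beta',-2\alpha',-2\alpha'\beta'\rangle$, i.e. $L(\alpha,\beta)$ and $L(\alpha',\beta')$ have isometric Killing forms; then I would invoke the classification of three-dimensional simple Lie algebras by their Killing form — a three-dimensional simple Lie algebra is determined up to isomorphism by the isometry class of its Killing form (the reconstruction sketched in the commented-out passage, or equivalently \cite{Jacobson1958}) — to conclude $L(\alpha,\beta)\cong L(\alpha',\beta')$.

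The main obstacle, or rather the point requiring the most care, is the equivalence between isomorphism of quaternion algebras and isometry of their pure subforms, together with the fact that a three-dimensional simple Lie algebra is recovered from its Killing form. Both are classical but deserve a precise reference; alternatively, one can bypass the Lie-theoretic reconstruction entirely by composing (c) $\Rightarrow$ (b) (isometry of pure subforms $\Rightarrow$ isomorphism of quaternion algebras, the converse direction of the standard fact) with (b) $\Rightarrow$ (a) via $\ss\mapsto\mathrm{Im}(\mathcal{H})$, since $\mathrm{Im}\bigl(\begin{smallmatrix}-\alpha,-\beta\\ \hline k\end{smallmatrix}\bigr)\cong L(\alpha,\beta)$ as computed in the text. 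This makes the cycle entirely a matter of the quaternion-algebra dictionary plus Proposition \ref{pp quaternion from 3DSLA}, and I would present it that way: (a) $\Rightarrow$ (b) by functoriality of $\mathcal{H}$, (b) $\Rightarrow$ (c) and (c) $\Rightarrow$ (b) by the norm-form criterion for quaternion algebras, and (b) $\Rightarrow$ (a) by taking imaginary parts.
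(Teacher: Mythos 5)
Your proof is correct and matches the paper's intent: the proposition is stated there without proof, as a résumé of the standard quaternion-algebra dictionary (isomorphism of quaternion algebras $\Leftrightarrow$ isometry of their pure subforms) combined with Proposition \ref{pp quaternion from 3DSLA} and the computation of $Im\bigl(\begin{smallmatrix}-\alpha,-\beta\\ \hline k\end{smallmatrix}\bigr)$, which is exactly what you assemble. Your final presentation --- (a) $\Rightarrow$ (b) by functoriality of $\mathcal{H}$, (b) $\Leftrightarrow$ (c) by the pure-subform criterion, (b) $\Rightarrow$ (a) by taking imaginary parts --- is the right choice, since it avoids the circularity of invoking Corollary \ref{cor 3DSLA iso iff Killing iso}, which the paper deduces from this very proposition.
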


\begin{cor} \label{cor 3DSLA iso iff Killing iso}
Two three-dimensional simple Lie algebras are isomorphic if and only if their Killing forms are isometric. In particular, a three-dimensional simple Lie algebra is split if and only if its Killing form is isotropic.
\end{cor}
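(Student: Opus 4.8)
The first statement is essentially a reformulation of the equivalence (a) $\iff$ (c) in Proposition \ref{equivalences quaternion alg}. First I would invoke the preceding Proposition (the one stating that every three-dimensional simple Lie algebra $\ss$ is isomorphic to some $L(\alpha,\beta)$), so that given two three-dimensional simple Lie algebras $\ss$ and $\ss'$ we may write $\ss \cong L(\alpha,\beta)$ and $\ss' \cong L(\alpha',\beta')$. By the stated fact that the Killing form of $L(\alpha,\beta)$ is $<-2\beta,-2\alpha,-2\alpha\beta>$, the Killing form of $\ss$ is isometric to $<-2\beta,-2\alpha,-2\alpha\beta>$ and similarly for $\ss'$. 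Now $<-2\beta,-2\alpha,-2\alpha\beta>$ is isometric to $<-2\beta',-2\alpha',-2\alpha'\beta'>$ if and only if $<\beta,\alpha,\alpha\beta>$ is isometric to $<\beta',\alpha',\alpha'\beta'>$ (scale by $-2 \in k^*$, an isometry-preserving operation since scaling a quadratic form by a fixed nonzero scalar commutes with change of basis). So the isometry of Killing forms is equivalent to condition (c), which by Proposition \ref{equivalences quaternion alg} is equivalent to condition (a), i.e. $L(\alpha,\beta) \cong L(\alpha',\beta')$, i.e. $\ss \cong \ss'$. This proves the first claim in both directions.

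For the second statement, I would specialise to $\sl(2,k) \cong L(-1,1)$, whose Killing form is $<-2\cdot 1, -2\cdot(-1), -2\cdot(-1)\cdot 1> = <-2,2,2>$. By the first part of the corollary, a three-dimensional simple Lie algebra $\ss$ is split (i.e. isomorphic to $\sl(2,k)$) if and only if its Killing form is isometric to $<-2,2,2>$. It then remains to check that a ternary quadratic form of discriminant $[-2] \in \square$ (which the Killing form of any three-dimensional simple Lie algebra has, by the Remark) is isotropic if and only if it is isometric to $<-2,2,2>$. The form $<-2,2,2>$ is visibly isotropic (e.g. the vector with coordinates giving $-2+2 = 0$ on the first two components). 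Conversely, if a ternary form $q$ with $disc(q) = [-2]$ is isotropic, then by the standard structure theory of isotropic quadratic forms $q$ splits off a hyperbolic plane, $q \cong <1,-1> \perp <c>$ for some $c \in k^*$; computing the discriminant forces $[c] = [-2] \in \square$, so $q \cong <1,-1,-2>$, and finally $<1,-1,-2> \cong <-2,2,2>$ since scaling by $-2$ sends $<1,-1,-2>$ to $<-2,2,4> \cong <-2,2,1>$...

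Here the last line needs care, so let me reorganise: rather than chasing explicit diagonalisations, I would argue that $<1,-1>$ is the unique (up to isometry) isotropic binary form, hence any isotropic ternary form $q$ is isometric to $<1,-1,c>$ where $[c] = disc(q)$ up to the usual sign conventions; since $\sl(2,k)$ has isotropic Killing form with the same discriminant $[-2]$, its Killing form is also of the shape $<1,-1,c'>$ with $[c'] = [c]$, whence the two forms are isometric. By the first part of the corollary this gives $\ss \cong \sl(2,k)$, i.e. $\ss$ is split. The converse — a split Lie algebra has isotropic Killing form — is immediate since $<-2,2,2>$ is isotropic, or alternatively from Proposition \ref{pp poly car and isotropic} applied to a diagonalisable ${\rm ad}(h)$. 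The main obstacle is purely bookkeeping in the second half: pinning down exactly which normalisation of "discriminant" is in force so that "isotropic ternary form of discriminant $[-2]$" is genuinely a single isometry class, and making sure the sign conventions in the Remark match those used for $<\beta,\alpha,\alpha\beta>$; once the conventions are fixed, everything reduces to the classification of isotropic forms of small rank and to Proposition \ref{equivalences quaternion alg}.
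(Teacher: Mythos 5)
Your argument is correct and is essentially the one the paper intends: the paper states this as an immediate consequence of Proposition \ref{equivalences quaternion alg} (first part) together with the fact that an isotropic ternary form of discriminant $[-2]$ is unique up to isometry, which is exactly your reorganised argument for the second part. You were right to abandon the explicit scaling chain $<1,-1,-2>\to<-2,2,4>$ (scaling a form by $-2$ does not produce an isometric form), and the replacement via ``hyperbolic plane plus a rank-one part determined by the discriminant'' is the clean and correct route.
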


\section{Construction of non-split three-dimensional simple Lie algebras from involutions of sl(2,k)} \label{Section 3DLA construction}

In this section we give a construction of non-split three-dimensional simple Lie algebras from involutions of $\mathfrak{sl}(2,k)$ and characterise those which one can be obtained in this way. We show that non-split three-dimensional simple Lie algebras constructed from these involutions define a particular subgroup of the Brauer group $B(k)$ of $k$. We first introduce some notation. Let
$$k^*_{-1}:=\lbrace x^2+y^2 ~ | ~ x,y \in k \rbrace \setminus \lbrace 0 \rbrace.$$
This is a subgroup of $k^*$ and if $-1 \in k^{*2}$, then $\ssquare \cong \lbrace 1 \rbrace$ since
$$(\frac{1+\Delta}{2})^2+(\sqrt{-1}\Big(\frac{1-\Delta}{2}\Big))^2=\Delta \qquad \forall \Delta \in k^*.$$

Let $\ss$ be a split three-dimensional simple Lie algebra, $K$ be its Killing form and $\sigma$ be a non-trivial involutive automorphism of $\ss$ such that
$$[\frac{K(x,x)}{2}]\neq 1 \in \square \qquad \forall x \in \ss^{\sigma}.$$
To this data, we are going to associate another three-dimensional simple Lie algebra $\ss'$. Let $\Lambda \in k^*$ be such that
$$[\Lambda]=[\frac{K(x,x)}{2}] \in \square \qquad \forall x \in \ss^{\sigma},$$
and $\l$ be a square root of $\Lambda$ in a non-trivial quadratic extension of $k$. Since $\sigma$ is involutive we have
$$\ss\cong \ll \oplus \ppp,$$
where $\ll$ is the one-dimensional eigenspace for the eigenvalue $1$ and $\ppp$ is the two-dimensional eigenspace for the eigenvalue $-1$. Let $\ss'$ be the three-dimensional simple $k$-Lie algebra
$$\ss':=\ll \oplus \l \ppp$$
with the Lie bracket extended from $\ss$:
$$[a+\l b,c+\l d]=([a,c]+\Lambda[b,d])+\l([b,c]+[a,d]) \qquad \forall a+\l b,c+\l d \in \ss'.$$

\begin{thm} \label{thm split pair -> non-split pair}
Let $\ss$ be a split three-dimensional simple Lie algebra, $K$ be its Killing form and $\sigma$ be a non-trivial involutive automorphism of $\ss$ such that
$$[\frac{K(x,x)}{2}]\neq 1 \in \square \qquad \forall x \in \ss^{\sigma}.$$
The three-dimensional simple Lie algebra $\ss'$ associated to $(\ss,\sigma)$ by the construction above is non-split if and only if
$$[K(x,x)]\neq 1 \in \ssquare \qquad \forall x \in \ss^{\sigma}.$$
\end{thm}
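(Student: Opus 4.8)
The strategy is to exhibit a concrete element $x$ of $\ss'$, compute $K'(x,x)$ in terms of the Killing form $K$ of $\ss$, and then invoke Corollary \ref{cor 3DSLA iso iff Killing iso} (a three-dimensional simple Lie algebra is split iff its Killing form is isotropic, equivalently anisotropic iff non-split). So the whole statement reduces to understanding when the Killing form $K'$ of $\ss'$ is isotropic.

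\textbf{Step 1: identify the Killing form of $\ss'$.} Pick a basis of $\ss$ adapted to $\sigma$: a vector $h$ spanning $\ll$ and $e,f$ spanning $\ppp$. Since $\ss$ is split, by Proposition \ref{pp poly car and isotropic} one may normalise so that, say, $K(h,h)=2\Lambda$ (using that $[\Lambda]=[\tfrac{K(h,h)}{2}]$ and rescaling $h$; note $K(h,h)\neq 0$ because $[\tfrac{K(h,h)}{2}]\neq 1$ in $\square$ forces it nonzero). In $\ss'=\ll\oplus\l\ppp$ the element $h$ still lies in $\ss'$ and $K'(h,h)=K(h,h)=2\Lambda$ by the bracket formula (the bracket on $\ll$ is unchanged, and the Killing form is computed from $\mathrm{ad}$). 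For $b\in\ppp$ one gets $\l b\in\ss'$ with $K'(\l b,\l b)=\Lambda\, K(b,b)$, again directly from the twisted bracket: $\mathrm{ad}_{\ss'}(\l b)$ acts as $\l\,\mathrm{ad}_\ss(b)$ on $\ll$ and as $\mathrm{ad}_\ss(b)$ composed with multiplication by $\Lambda$ on the $\l\ppp$ part, so each trace picks up exactly one factor of $\Lambda$. Similarly the cross terms $K'(h,\l b)=0$ since $K$ was already $\sigma$-orthogonal on $\ll\perp\ppp$. Hence, as a quadratic form, $K'\cong\langle 2\Lambda\rangle\oplus\Lambda\cdot(K|_\ppp)$.

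\textbf{Step 2: translate isotropy of $K'$.} Since $\ss$ is split, $K$ is isotropic of rank $3$, so $K|_\ppp$ has discriminant $[-1]\in\square$ (because $\mathrm{disc}(K)=[-2]$ and $\mathrm{disc}(K)=[2\Lambda]\cdot\mathrm{disc}(K|_\ppp)$... adjust the constant). Thus $K|_\ppp\cong\langle c\rangle\oplus\langle -c\cdot(\text{square})\rangle$ is itself isotropic, i.e. a hyperbolic plane up to scaling; in particular for every $x\in\ppp=\ss^{\sigma,-}$... but the relevant fixed points are $x\in\ll=\ss^\sigma$, one-dimensional, on which $K(x,x)\in 2\Lambda\cdot k^{*2}$. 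Now $K'$ is isotropic iff $\langle 2\Lambda\rangle$ is represented by $-\Lambda\cdot(K|_\ppp)$, i.e. iff $-2$ is represented by $K|_\ppp$ (cancel $\Lambda$), i.e. iff $K|_\ppp$ represents $-2$. Writing $K|_\ppp\cong\langle 1,-1\rangle$ up to a scalar $\mu$ with $\mu\in\Lambda\cdot k^{*2}$ (matching discriminants), "$K|_\ppp$ represents $-2$'' becomes "$-2/\mu$ is a sum of two squares'', equivalently (since $-2/\mu$ and $-2\Lambda$ differ by a square, and $\Lambda$ represents $K(x,x)/2$) the condition $-K(x,x)\in k^*_{-1}$ for the generator $x$ of $\ll$. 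Negating: $\ss'$ is non-split $\iff$ $-K(x,x)\notin k^*_{-1}$ $\iff$ $[-K(x,x)]\neq 1\in\ssquare$ $\iff$ $[K(x,x)]\neq [-1]\in\ssquare$. One must finally check this is what "$[K(x,x)]\neq 1\in\ssquare$'' means: indeed in the group $\ssquare$ the class $[-1]$ is trivial iff $-1\in k^*_{-1}$, which always holds ($-1=0^2+\dots$? no); rather $-1\in k^*_{-1}$ precisely when $-1$ is a sum of two squares, which is automatic unless $k$ is formally real-ish — so in general one argues directly: $[K(x,x)]=1$ in $\ssquare$ $\iff$ $K(x,x)\in k^*_{-1}$, and one shows $K(x,x)\in k^*_{-1}\iff -K(x,x)\in k^*_{-1}$ using that $\tfrac{K(x,x)}{2}$ is \emph{not} a square (the hypothesis), via the identity displayed just before the theorem, $(\tfrac{1+\Delta}{2})^2+(\l(\tfrac{1-\Delta}{2}))^2=\Delta$ — this forces the two-square subgroup to be stable under the relevant sign twist exactly under the standing hypothesis.

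\textbf{Main obstacle.} The genuine content is Step 2's bookkeeping: reconciling the condition "$K'$ isotropic'' with the clean statement "$K(x,x)\in k^*_{-1}$'', keeping careful track of the ambiguity $\Lambda$ modulo squares and the distinction between $\square$ (squares) and $\ssquare$ (sums of two squares). The hypothesis $[\tfrac{K(x,x)}{2}]\neq 1\in\square$ is used precisely to make $\langle 2\Lambda\rangle\perp\Lambda(K|_\ppp)$ genuinely rank-3 nondegenerate and to ensure the equivalence "sum of two squares condition on $K(x,x)$'' is insensitive to the square-class representative $\Lambda$ chosen; isolating where exactly it enters is the subtle point. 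Everything else — the computation of $\mathrm{ad}_{\ss'}$ and its traces — is a routine verification from the displayed bracket formula, best done in the adapted basis $\{h,e,f\}$.
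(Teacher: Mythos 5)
Your Step 1 is correct and is a genuinely different (and attractive) route from the paper's: you compute the Killing form of the twist directly, getting $K'\cong K|_{\ll}\perp\Lambda\,(K|_{\ppp})$, and then appeal to Corollary \ref{cor 3DSLA iso iff Killing iso}, whereas the paper first finds a diagonalisable $h$ orthogonal to the fixed line (Lemma \ref{existenceHsplitOrthogonal}), computes structure constants to identify $\ss'\cong L(-\Lambda,-\Lambda)\cong L(-\Lambda,1)$, and finishes with a Witt-cancellation lemma. But Step 2 contains a genuine error that derails the argument. The discriminant of $K|_{\ppp}$ is not $[-1]$: since $\det K=[-2]\in\square$ and $K(h,h)\in 2\Lambda\, k^{*2}$, one gets $\det(K|_{\ppp})=[-\Lambda]\in\square$, so $K|_{\ppp}\cong\langle 2,-2\Lambda\rangle\cong 2\langle 1,-\Lambda\rangle$. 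In particular $K|_{\ppp}$ is \emph{anisotropic} precisely because of the standing hypothesis that $\Lambda$ is not a square — if it were hyperbolic as you claim, $K'$ would always be isotropic and $\ss'$ always split, i.e.\ the theorem would be false. The subsequent normal form $K|_{\ppp}\cong\mu\langle 1,-1\rangle$ and the deduction ``represents $-2$ iff $-2/\mu$ is a sum of two squares'' (which would require $\mu\langle 1,1\rangle$, not $\mu\langle 1,-1\rangle$) are therefore unjustified, and they introduce a sign error: your final criterion $[K(x,x)]\neq[-1]\in\ssquare$ is not equivalent to the theorem's $[K(x,x)]\neq 1\in\ssquare$. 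The case $k=\mathbb{R}$ refutes it: for a Cartan involution of $\sl(2,\mathbb{R})$ one has $K(x,x)<0$ on the fixed line, so $[K(x,x)]=[-1]$ in $\ssquare$, yet $\ss'=\su(2)$ is non-split. The attempted patch ``$K(x,x)\in k^*_{-1}\iff -K(x,x)\in k^*_{-1}$ under the hypothesis'' fails over $\mathbb{R}$ as well.

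The repair within your own framework is short. Since $q$ nondegenerate represents $c\neq 0$ iff $q\perp\langle -c\rangle$ is isotropic, $K'$ is isotropic iff $K|_{\ppp}$ represents $-2$, iff $2\langle 1,-\Lambda\rangle$ represents $-2$, iff $\langle 1,-\Lambda\rangle$ represents $-1$, iff $\langle 1,1,-\Lambda\rangle$ is isotropic, iff $\Lambda\in k^*_{-1}$ (the degenerate solution with last coordinate zero only occurs when $-1\in k^{*2}$, in which case $k^*_{-1}=k^*$ anyway). Finally $K(x,x)\in 2\Lambda\,k^{*2}$ and $2=1^2+1^2\in k^*_{-1}$, so $\Lambda\in k^*_{-1}$ iff $[K(x,x)]=1\in\ssquare$, which is the stated criterion with the correct sign. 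Note also that the hypothesis $[\tfrac{K(x,x)}{2}]\neq 1\in\square$ is what makes $K|_{\ppp}$ anisotropic; it is not needed to make $K'$ nondegenerate, nor for any ``insensitivity to the representative $\Lambda$'' — the condition $\Lambda\in k^*_{-1}$ is automatically a square-class invariant since $k^{*2}\subseteq k^*_{-1}$.
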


\begin{proof}
We first prove the following lemma.
\begin{lm} \label{existenceHsplitOrthogonal}
Let $x$ be a non-zero element of a split three-dimensional simple Lie algebra $\ss$. Then, there exists $h$ in $\ss$ such that ${\rm ad}(h)$ is diagonalisable and which is orthogonal to $x$.
\end{lm}

\begin{proof}
Let $\lbrace h,e,f \rbrace$ be a standard $\sl(2,k)$-triple and $K$ be the Killing form of $\ss$. Since ${\rm Span}<e,f>$ is a hyperbolic plane, there exists $x'\in {\rm Span}<e,f>$ such that $K(x,x)=K(x',x')$. This implies that there exists $g \in SO(\ss)$ such that $g(x')=x$. Since $h$ is orthogonal to $x'$, $g(h)$ is orthogonal to $x$, we have $K(g(h),g(h))=K(h,h)$ and so the linear map ${\rm ad}(g(h))$ is diagonalisable.
\end{proof}

By Lemma \ref{existenceHsplitOrthogonal}, there exist $h,e,f \in \ss$ such that $\lbrace h,e,f \rbrace$ is a standard basis of $\ss\cong \sl(2,k)$ and $x \in {\rm Span}<e,f>$ where $x$ is a non-zero fixed point of $\sigma$. Since $\sigma$ is involutive and $\sigma(h)=-h$ we obtain that $\sigma$ is a reflection on the hyperbolic plane ${\rm Span}<e,f>$ and so there exists $a \in k^*$ such that $\sigma(e)=af$ and $\sigma(f)=\frac{1}{a}e$. The eigenspaces $\ll$ and $\ppp$ are 
$$\ll={\rm Span}<e+af>, \quad \ppp={\rm Span}<h,e-af>,$$
and so
$$\ss'=\ll \oplus \l \ppp={\rm Span}<e+af>\oplus~ {\rm Span}<\l h, \l (e-af)>.$$
We now calculate the structure constants of $\ss'$:
\begin{align*}
[\frac{\l h}{2},\frac{e+af}{2}]&=\frac{\l (e-af)}{2},\\
[\frac{e+af}{2},\frac{\l(e-af)}{2}]&=-a\frac{\l h}{2},\\
[\frac{\l(e-af)}{2},\frac{\l h}{2}]&=-\Lambda \frac{e+af}{2}.
\end{align*}
Since 
$$[\Lambda]=[\frac{K_{\ss}(x,x)}{2}]=[\frac{K_{\ss'}(e+af,e+af)}{2}]=[a] \in \square$$
it follows that $\ss'$ is isomorphic to $L(-\Lambda,-\Lambda)$. The quadratic form $<2\Lambda,2\Lambda,-2\Lambda^2>$ is isometric to $<-2,2\Lambda,2\Lambda>$ and so, by Proposition  \ref{equivalences quaternion alg}, the Lie algebra $L(-\Lambda,-\Lambda)$ is isomorphic to $L(-\Lambda,1)$.

\begin{lm} \label{L(D,D) split SSI somme Deux Carrés}
Let $\Delta,\Delta' \in k^*$. The Lie algebras $L(-\Delta,1)$ and $L(-\Delta',1)$ are isomorphic if and only if $[\Delta]=[\Delta']$ in $\ssquare$. In particular, $L(-\Delta,1)$ is split if and only if $\Delta$ is a sum of two squares.
\end{lm}

\begin{proof}
By Proposition \ref{equivalences quaternion alg}, $L(-\Delta,1)$ is isomorphic to $L(-\Delta',1)$ if and only $<1,-\Delta,-\Delta>$ is isometric to $<1,-\Delta',-\Delta'>$. By Witt's cancellation Theorem, $<1,-\Delta,-\Delta>$ is isometric to $<1,-\Delta',\Delta'>$ if and only if $<\Delta,\Delta>$ is isometric to $<\Delta',\Delta'>$. Since they have the same discriminant, they are isometric if and only if they represent a common element (Proposition 5.1 p.15 in \cite{LAM}), in other words if and only if $[\Delta]=[\Delta']\in\ssquare$.
\end{proof}
\end{proof}

This theorem motivates the following definition.

\begin{df}
Let $\ss$ be a split three-dimensional simple Lie algebra, let $K$ be its Killing form and let $\sigma$ be an automorphism of $\ss$. We say that $\sigma$ is of Cartan type if and only if:\\
$$\sigma\neq Id, \qquad \sigma^2=Id, \qquad [K(x,x)]\neq 1 \in \ssquare \quad \forall x \in \ss^{\sigma}.$$
Two automorphisms of Cartan type $\sigma$ and $\sigma'$ are said to be equivalent if
$$[K(x,x)]=[K(x',x')] \in \ssquare \qquad \forall x \in \ss^{\sigma}, ~ \forall x' \in \ss^{\sigma'}.$$
\end{df}

\begin{rem} \label{rem on split involutive automorphisms}
\begin{enumerate}[label=\alph*)]
\item The Killing form $K$ of a split three-dimensional simple Lie algebra $\ss$ represents all the elements of $k$. Hence for all $\alpha \in k^*$ such that $[\alpha]\neq 1\in \ssquare$, there exists an automorphism of Cartan type $\sigma$ of $\ss$ such that
$$K(x,x)=\alpha \qquad \forall x \in \ss^{\sigma}.$$
\item If $k=\mathbb{R}$, an automorphism $\sigma$ of $\sl(2,\mathbb{R})$ is of Cartan type if and only if $\sigma$ is a Cartan involution.
\item If $x\in \ss$ satisfies to $[K(x,x)]\neq 1\in \ssquare$ then $\frac{K(x,x)}{2}$ is not a square.
\end{enumerate}
\end{rem}

We now study the non-split three-dimensional simple Lie algebras which can be obtained by the construction above.

\begin{df}
A non-split three-dimensional simple Lie algebra $\ss'$ is said to be obtainable if there exists an automorphism of Cartan type $\sigma$ of a split three-dimensional simple Lie algebra $\ss$ such that $\ss'$ is isomorphic to the Lie algebra associated to $(\ss,\sigma)$ by the construction above.
\end{df}

We now summarise various conditions for a non-split three-dimensional simple Lie algebra to be obtainable in the following proposition.

\begin{pp} \label{pp obtainable}
Let $\ss'$ be a non-split three-dimensional simple Lie algebra and $K$ be its Killing form. The following are equivalent:
\begin{enumerate}[label=\alph*)]
\item\label{pp obtainable cond a} $\ss'$ is obtainable,
\item\label{pp obtainable cond b} there exist $x,h \in \ss'$ such that
$$h \bot x, \quad [K(x,x)]\neq 1 \in \ssquare \text{ and } [K(h,h)]=[K(x,x)] \in \square,$$
\item\label{pp obtainable cond c} $\ss'$ is isomorphic to $L(-\Delta,-\Delta)$ for some $\Delta \in k^*$,
\item\label{pp obtainable cond d} the Killing form of $\ss'$ represents $-2$.
\end{enumerate}
\end{pp}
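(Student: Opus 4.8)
The plan is to prove the equivalences by establishing a cycle, roughly \ref{pp obtainable cond a} $\Rightarrow$ \ref{pp obtainable cond b} $\Rightarrow$ \ref{pp obtainable cond c} $\Rightarrow$ \ref{pp obtainable cond d} $\Rightarrow$ \ref{pp obtainable cond a}, exploiting the dictionary between three-dimensional simple Lie algebras and their Killing forms (Corollary \ref{cor 3DSLA iso iff Killing iso}, Proposition \ref{equivalences quaternion alg}) together with the structure-constant computation already carried out in the proof of Theorem \ref{thm split pair -> non-split pair}.

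For \ref{pp obtainable cond a} $\Rightarrow$ \ref{pp obtainable cond b}: if $\ss'$ is obtainable, it arises from some $(\ss,\sigma)$ with $\sigma$ of Cartan type, and the computation in Theorem \ref{thm split pair -> non-split pair} exhibits inside $\ss'$ the vectors $e+af \in \ll$ and $\l h, \l(e-af) \in \l\ppp$. Taking $x := e+af$ and $h := \l h$, one reads off from the relation $[\Lambda]=[K_{\ss'}(e+af,e+af)/2]=[a]$ that $[K(x,x)]\neq 1\in\ssquare$ (this is exactly the Cartan-type hypothesis transported to $\ss'$), while the bracket formula gives $K(h,h)=K_{\ss'}(\l h,\l h)=\Lambda\cdot K_\ss(h,h)$; since $[K_\ss(h,h)/2]=[\Lambda]$ in $\square$ (as ${\rm ad}_\ss(h)$ is diagonalisable and $K_\ss(h,h)/2 = \Lambda$ up to squares), one gets $[K(h,h)]=[\Lambda]=[K(x,x)]\in\square$; orthogonality of $\l h$ and $e+af$ follows since $h\perp e,f$ in $\ss$. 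For \ref{pp obtainable cond b} $\Rightarrow$ \ref{pp obtainable cond c}: given such $x,h$, write $\Delta := K(x,x)$ (up to squares), use Corollary \ref{cor norm of cross-product}-type/Proposition \ref{pp poly car and isotropic} reasoning to see that $[x,h]$ is orthogonal to both and has $K([x,h],[x,h])$ proportional to $-K(x,x)K(h,h)$; in the basis $\{x,h,[x,h]\}$ one computes the Killing form to be $\langle \Delta,\Delta,-\Delta^2\rangle$ up to squares, which by Proposition \ref{equivalences quaternion alg} identifies $\ss'$ with $L(-\Delta,-\Delta)$. The implication \ref{pp obtainable cond c} $\Rightarrow$ \ref{pp obtainable cond d} is immediate: the Killing form of $L(-\Delta,-\Delta)$ is $\langle 2\Delta,2\Delta,-2\Delta^2\rangle\cong\langle -2,2\Delta,2\Delta\rangle$, which represents $-2$. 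Finally \ref{pp obtainable cond d} $\Rightarrow$ \ref{pp obtainable cond a}: if $K$ represents $-2$, pick $x_0$ with $K(x_0,x_0)=-2$; using Remark \ref{rem on split involutive automorphisms}a (the Killing form of a split algebra represents every element of $k$) and Corollary \ref{cor 3DSLA iso iff Killing iso}, one shows $\ss'\cong L(-\Delta,-\Delta)$ for a suitable $\Delta$ (namely, complete $\langle -2\rangle$ inside the anisotropic $K$; the orthogonal complement of $x_0$ has discriminant forcing the form $\langle\Delta,\Delta\rangle$ up to squares for $\Delta := K(x,x)$ on that plane, with $[\Delta]\neq 1\in\ssquare$ by non-splitness and Lemma \ref{L(D,D) split SSI somme Deux Carrés}), then invoke Remark \ref{rem on split involutive automorphisms}a to produce an automorphism of Cartan type $\sigma$ on a split $\ss$ with $K_\ss(y,y)=\Delta$ on $\ss^\sigma$, and Theorem \ref{thm split pair -> non-split pair} (together with Lemma \ref{L(D,D) split SSI somme Deux Carrés}) guarantees the resulting $\ss'$ has Killing form $\cong\langle -2,2\Delta,2\Delta\rangle$, hence is isomorphic to the given one.

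The main obstacle I expect is the bookkeeping in \ref{pp obtainable cond b} $\Rightarrow$ \ref{pp obtainable cond c} and in the converse \ref{pp obtainable cond d} $\Rightarrow$ \ref{pp obtainable cond a}: one must verify that the span of $\{x,h\}$ is actually a non-degenerate plane (so that $[x,h]\neq 0$ and $\{x,h,[x,h]\}$ is a basis), handle the case $K(x,h)\neq 0$ versus $K(x,h)=0$ (one can reduce to the orthogonal case after adjusting $h$ within the plane, using that $h\perp x$ is already assumed in \ref{pp obtainable cond b}), and keep careful track of scaling by squares throughout so that the discriminant condition $disc=[-2]$ is manifestly preserved and the identifications via Proposition \ref{equivalences quaternion alg} are legitimate. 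Everything else is a direct translation through the Killing-form/quaternion-algebra dictionary already set up in Section \ref{section generalities} and the explicit constants from Section \ref{Section 3DLA construction}.
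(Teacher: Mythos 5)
Your proposal is correct and, apart from one step, follows the paper's route: the identification with $L(-\Delta,-\Delta)$ via the structure constants computed in Theorem \ref{thm split pair -> non-split pair}, the discriminant argument forcing a Killing form representing $-2$ into the shape $\langle -2,\delta,\delta\rangle$, and Remark \ref{rem on split involutive automorphisms}(a) to manufacture the Cartan-type automorphism are exactly the paper's ingredients. Where you genuinely differ is \ref{pp obtainable cond b}$\Rightarrow$\ref{pp obtainable cond c}: the paper declares \ref{pp obtainable cond a} and \ref{pp obtainable cond b} equivalent ``by construction'' and routes \ref{pp obtainable cond b} through \ref{pp obtainable cond a}, whereas you diagonalise $K$ directly in the basis $\lbrace x,h,[x,h]\rbrace$ using $K([x,h],[x,h])=-\tfrac{1}{2}K(x,x)K(h,h)$; this is more self-contained and supplies an argument the paper leaves implicit. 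Two constant-tracking slips of the kind you anticipated should be repaired: for a standard triple element $h$ one has $[K_{\ss}(h,h)/2]=1\in\square$ (not $[\Lambda]$), and the desired equality then reads $[K_{\ss'}(\l h,\l h)]=[\Lambda K_{\ss}(h,h)]=[2\Lambda]=[K_{\ss'}(x,x)]$; and in \ref{pp obtainable cond b}$\Rightarrow$\ref{pp obtainable cond c} the correct parameter is $\Delta=K(x,x)/2$ (harmless because $[2]=1$ in $\ssquare$, as $2=1^2+1^2$). Neither slip affects the validity of the conclusions.
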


\begin{proof}
Conditions \ref{pp obtainable cond a} and \ref{pp obtainable cond b} are equivalent by construction. As we saw in the proof of Theorem \ref{thm split pair -> non-split pair}, conditions \ref{pp obtainable cond a} and \ref{pp obtainable cond b} imply condition \ref{pp obtainable cond c}. Conversely, if $\ss'\cong L(-\Delta,-\Delta)$ for some $\Delta \in k^*$, we have $[\Delta]\neq 1\in \ssquare$ since $\ss'$ is non-split. By Remark \ref{rem on split involutive automorphisms}, $L(-\Delta,-\Delta)$ is obtainable and so Conditions \ref{pp obtainable cond a}, \ref{pp obtainable cond b} and \ref{pp obtainable cond c} are equivalent. We now show that Conditions \ref{pp obtainable cond a}, \ref{pp obtainable cond b} and \ref{pp obtainable cond c} are equivalent to Condition \ref{pp obtainable cond d}. If the Killing form $K$ of $\ss$ represents $-2$, there exists $\delta$ and $\gamma$ in $k^*$ such that $K$ is isometric to $<-2,\delta,\gamma>$. Since $disc(K)=[-2] \in \square$ and $disc(<-2,\delta,\gamma>)=[-2\delta\gamma]\in \square$ we have $[\gamma]=[\delta] \in \square$ and then $K$ is isometric to the quadratic form $<-2,\delta,\delta>$ which is isometric to the Killing form of $L(\frac{-\delta}{2},\frac{-\delta}{2})$. Hence $\ss$ is isomorphic to $L(\frac{-\delta}{2},\frac{-\delta}{2})$. Conversely, the Killing form of $L(\delta,\delta)$ is isometric to $<-2\delta,-2\delta,-2>$ and hence represents $-2$.
\end{proof}

Consider the Brauer group $B(k)$ of $k$. The elements of $B(k)$ are in $1:1$ correspondence with the isomorphism classes of central division algebras over $k$ (for details see Chap.IV of \cite{LAM}). Non-isomorphic quaternion algebras represent different elements of $B(k)$ and, we now consider the elements of $B(k)$ represented by the quaternion algebras constructed from obtainable non-split three-dimensional simple Lie algebras (see Section \ref{section generalities}):

\begin{df}
Let $B(k)$ be the Brauer group of $k$. Define
$$H(k):=[\mathcal{H}(\sl(2,k))]\cup\Big\{ [\mathcal{H}(\ss)]\in B(k) ~ | ~ \ss \text{ is an obtainable non-split three-dimensional simple Lie algebra} \Big\}.$$
\end{df}

A quaternion algebra is of order $2$ in the Brauer group but, in general, the set of classes of quaternion algebras in $B(k)$ is not a subgroup. However, the set $H(k)$ of classes of quaternion algebras associated to obtainable non-split three-dimensional simple Lie algebra $\ss$ is a subgroup of $B(k)$.

\begin{pp}\label{pp subgroup of the Brauer group}
The set $H(k)$ is a subgroup of the Brauer group $B(k)$ isomorphic to $\ssquare$.
\end{pp}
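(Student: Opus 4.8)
The plan is to exhibit an explicit group isomorphism between $H(k)$ and $\ssquare$. First I would pin down exactly which quaternion algebras occur. By Proposition \ref{pp obtainable}, an obtainable non-split three-dimensional simple Lie algebra is isomorphic to $L(-\Delta,-\Delta)$ for some $\Delta\in k^*$ with $[\Delta]\neq 1\in\ssquare$, and by Proposition \ref{pp quaternion from 3DSLA} its associated quaternion algebra is $\mathcal{H}(L(-\Delta,-\Delta))\cong\bigl(\begin{smallmatrix}\Delta,\Delta\\ \hline k\end{smallmatrix}\bigr)$. Since $\bigl(\begin{smallmatrix}\Delta,\Delta\\ \hline k\end{smallmatrix}\bigr)\cong\bigl(\begin{smallmatrix}\Delta,-\Delta^2\\ \hline k\end{smallmatrix}\bigr)\cong\bigl(\begin{smallmatrix}\Delta,-1\\ \hline k\end{smallmatrix}\bigr)$, and adding the split class $[\mathcal{H}(\sl(2,k))]=\bigl[\bigl(\begin{smallmatrix}1,-1\\ \hline k\end{smallmatrix}\bigr)\bigr]$ which corresponds to $\Delta=1$, I get
$$H(k)=\Big\{\big[\big(\begin{smallmatrix}\Delta,-1\\ \hline k\end{smallmatrix}\big)\big]\in B(k)\ \big|\ \Delta\in k^*\Big\}.$$

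Next I would define the map $\varphi:\square$ or rather $\ssquare\to B(k)$ by $[\Delta]\mapsto\bigl[\bigl(\begin{smallmatrix}\Delta,-1\\ \hline k\end{smallmatrix}\bigr)\bigr]$ and check it is a well-defined injective group homomorphism with image $H(k)$. Well-definedness: if $\Delta$ and $\Delta'$ differ by an element of $k^*_{-1}$, i.e. $\Delta'/\Delta=x^2+y^2$ for some $x,y\in k$, then $x^2+y^2$ is a norm from the algebra $\bigl(\begin{smallmatrix}-1,-1\\ \hline k\end{smallmatrix}\bigr)$ hence also a norm from $k(\sqrt{-1})$, which makes $\bigl(\begin{smallmatrix}\Delta,-1\\ \hline k\end{smallmatrix}\bigr)\cong\bigl(\begin{smallmatrix}\Delta',-1\\ \hline k\end{smallmatrix}\bigr)$ by the standard criterion $\bigl(\begin{smallmatrix}a,b\\ \hline k\end{smallmatrix}\bigr)\cong\bigl(\begin{smallmatrix}ac,b\\ \hline k\end{smallmatrix}\bigr)$ whenever $c$ is a norm from $k(\sqrt b)$. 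Homomorphism property: in $B(k)$ one has the identity $\bigl[\bigl(\begin{smallmatrix}\Delta,-1\\ \hline k\end{smallmatrix}\bigr)\bigr]+\bigl[\bigl(\begin{smallmatrix}\Delta',-1\\ \hline k\end{smallmatrix}\bigr)\bigr]=\bigl[\bigl(\begin{smallmatrix}\Delta\Delta',-1\\ \hline k\end{smallmatrix}\bigr)\bigr]$, which is the bilinearity of the Hilbert symbol in the first slot (equivalently $\bigl(\begin{smallmatrix}a,b\\ \hline k\end{smallmatrix}\bigr)\otimes\bigl(\begin{smallmatrix}a',b\\ \hline k\end{smallmatrix}\bigr)\sim\bigl(\begin{smallmatrix}aa',b\\ \hline k\end{smallmatrix}\bigr)$ in $B(k)$). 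Surjectivity onto $H(k)$ is immediate from the description above. Finally, injectivity: $\bigl(\begin{smallmatrix}\Delta,-1\\ \hline k\end{smallmatrix}\bigr)$ is split if and only if $-1$ is a norm from $k(\sqrt\Delta)$ (when $\Delta$ is a non-square) or $\Delta$ is a square; unwinding, this happens exactly when $\Delta\in k^*_{-1}$, i.e. $[\Delta]=1\in\ssquare$. In fact this last point is essentially Lemma \ref{L(D,D) split SSI somme Deux Carrés}: $L(-\Delta,-\Delta)$ is split iff $\Delta$ is a sum of two squares, and $L(-\Delta,-\Delta)$ is split iff its quaternion algebra is split. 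More generally $\bigl(\begin{smallmatrix}\Delta,-1\\ \hline k\end{smallmatrix}\bigr)\cong\bigl(\begin{smallmatrix}\Delta',-1\\ \hline k\end{smallmatrix}\bigr)$ iff $[\Delta]=[\Delta']\in\ssquare$, which gives both well-definedness and injectivity at once and also shows $H(k)$ is closed under the group operation, hence a subgroup.

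The main obstacle I expect is the homomorphism/closure step: one must be careful that $H(k)$ — a priori just a subset of $B(k)$ consisting of classes of quaternion algebras — is actually closed under multiplication, since a product of two quaternion classes need not be a quaternion class in general. The key fact making it work here is that all the relevant algebras share the common slot $-1$, so their Hilbert symbols multiply within the family $\{\bigl(\begin{smallmatrix}\Delta,-1\\ \hline k\end{smallmatrix}\bigr)\}$; I would state the needed facts about the Hilbert symbol (bilinearity in each argument, and the norm criterion for isomorphism) citing \cite{LAM}, and then the verification that $[\Delta]\mapsto\bigl[\bigl(\begin{smallmatrix}\Delta,-1\\ \hline k\end{smallmatrix}\bigr)\bigr]$ is an isomorphism $\ssquare\xrightarrow{\sim}H(k)$ is routine. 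A small side point to record is that $[\mathcal{H}(\sl(2,k))]$ is the identity of $B(k)$ and corresponds to $[1]\in\ssquare$, so including it in the definition of $H(k)$ does not enlarge the set beyond the subgroup.
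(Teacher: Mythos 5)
Your proposal is correct and follows essentially the same route as the paper: both identify the obtainable algebras' quaternion classes as $\bigl[\bigl(\begin{smallmatrix}-1,\Delta\\ \hline k\end{smallmatrix}\bigr)\bigr]$ and use linearity of quaternion symbols with the common slot $-1$ to get closure in $B(k)$, then match $[\Delta]\in\ssquare$ with the class. You are somewhat more explicit than the paper about well-definedness and injectivity of the correspondence (via the norm criterion for $k(\sqrt{-1})$, equivalently Lemma \ref{L(D,D) split SSI somme Deux Carrés}), which the paper leaves implicit when asserting that $\bigl[\bigl(\begin{smallmatrix}-1,\Delta\\ \hline k\end{smallmatrix}\bigr)\bigr]\mapsto\Delta$ is an isomorphism.
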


\begin{proof}
Let $\ss$ and $\ss'$ be obtainable non-split three-dimensional simple Lie algebras. By Proposition \ref{pp obtainable}, there exist $\Delta$ and $\Delta'$ in $k^*$ such that $\ss\cong L(-\Delta,1)$ and $\ss'\cong L(-\Delta',1)$ and by Proposition \ref{pp quaternion from 3DSLA} the quaternion algebras $\mathcal{H}(\ss)$ and $\mathcal{H}(\ss')$ are isomorphic respectively to $\bigl(\begin{smallmatrix}
-1, \Delta\\ \hline k
\end{smallmatrix} \bigr)$ and $\bigl(\begin{smallmatrix}
-1, \Delta'\\ \hline k
\end{smallmatrix} \bigr)$. By Linearity (see Theorem 2.11 p.60 in \cite{LAM}) we have
\begin{equation*}
\bigl(\begin{smallmatrix}
-1, \Delta\\ \hline k
\end{smallmatrix} \bigr)\otimes\bigl(\begin{smallmatrix}
-1, \Delta'\\ \hline k
\end{smallmatrix} \bigr)\cong \bigl(\begin{smallmatrix}
-1, \Delta\Delta'\\ \hline k
\end{smallmatrix} \bigr)\otimes M_2(k).
\end{equation*}
Hence, in the Brauer group we have
$$[\bigl(\begin{smallmatrix}
-1, \Delta\\ \hline k
\end{smallmatrix} \bigr)]\cdot[(\begin{smallmatrix}
-1, \Delta'\\ \hline k
\end{smallmatrix} \bigr)]=[(\begin{smallmatrix}
-1, \Delta\Delta'\\ \hline k
\end{smallmatrix} \bigr)]$$
and so $H(k)$ is a subgroup of $B(k)$. Furthermore the map $[\bigl(\begin{smallmatrix}
-1, \Delta\\ \hline k
\end{smallmatrix} \bigr)]\mapsto \Delta$ defines a group isomorphism between $H(k)$ and $\ssquare$.
\end{proof}

We can summarise the correspondences between automorphisms of Cartan type of $\sl(2,k)$, obtainable non-split three-dimensional simple Lie algebras, $\ssquare$ and $H(k)$ as follows.

\begin{thm}\label{thm corresponces Brauer}
We have the following correspondences
$$\begin{Bmatrix}
\text{equivalence classes of}\\
\text{automorphisms of}\\
\text{Cartan type of }\sl(2,k)
\end{Bmatrix}\Leftrightarrow\begin{Bmatrix}
\text{isomorphism classes of}\\
\text{obtainable non-split three-}\\
\text{dimensional simple Lie algebras}
\end{Bmatrix}\Leftrightarrow\begin{Bmatrix}
\text{elements}\\
\text{in}\\
\ssquare\end{Bmatrix}\Leftrightarrow\begin{Bmatrix}
\text{elements in the subgroup}\\
H(k)\text{ of the Brauer}\\
\text{group }B(k)\text{ of }k
\end{Bmatrix}.$$
\end{thm}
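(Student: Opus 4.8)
The plan is to establish the three double-arrows one at a time, reusing the bijections already built in the preceding results rather than constructing anything new. For the middle correspondence between isomorphism classes of obtainable non-split three-dimensional simple Lie algebras and elements of $\ssquare$, I would argue as follows. By Proposition \ref{pp obtainable}, every obtainable $\ss'$ is isomorphic to $L(-\Delta,1)$ for some $\Delta\in k^*$ with $[\Delta]\neq 1\in\ssquare$ (using that $L(-\Delta,-\Delta)\cong L(-\Delta,1)$, as shown in the proof of Theorem \ref{thm split pair -> non-split pair}), and Lemma \ref{L(D,D) split SSI somme Deux Carrés} says $L(-\Delta,1)\cong L(-\Delta',1)$ if and only if $[\Delta]=[\Delta']\in\ssquare$. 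Together with the split case $\sl(2,k)\cong L(1,1)$ being sent to $[1]=1\in\ssquare$, this gives a well-defined bijection from the isomorphism classes onto all of $\ssquare$, where surjectivity onto the nontrivial classes comes from Remark \ref{rem on split involutive automorphisms}\ref{rem on split involutive automorphisms cond a} (obtainability of $L(-\Delta,-\Delta)$ whenever $[\Delta]\neq 1$).

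For the rightmost correspondence between $\ssquare$ and $H(k)$, I would simply quote Proposition \ref{pp subgroup of the Brauer group}: the map $[\bigl(\begin{smallmatrix}-1,\Delta\\ \hline k\end{smallmatrix}\bigr)]\mapsto\Delta$ is a group isomorphism $H(k)\xrightarrow{\sim}\ssquare$, and by Proposition \ref{pp quaternion from 3DSLA} the quaternion algebra attached to the obtainable Lie algebra $L(-\Delta,1)$ is exactly $\bigl(\begin{smallmatrix}-1,\Delta\\ \hline k\end{smallmatrix}\bigr)$ (so the class $[\mathcal{H}(\sl(2,k))]$ is the identity, consistent with $\sl(2,k)\cong L(-1,1)$ giving $\bigl(\begin{smallmatrix}-1,1\\ \hline k\end{smallmatrix}\bigr)\cong M_2(k)$). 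Thus the bijection of the previous paragraph, followed by this isomorphism, matches isomorphism classes of obtainable non-split three-dimensional simple Lie algebras (together with the split one) bijectively with $H(k)$.

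For the leftmost correspondence, I would send an equivalence class of automorphisms of Cartan type $\sigma$ of $\sl(2,k)$ to the isomorphism class of the Lie algebra $\ss'$ associated to $(\sl(2,k),\sigma)$ by the construction of Section \ref{Section 3DLA construction}. By the definition of equivalence of Cartan-type automorphisms ($[K(x,x)]=[K(x',x')]\in\ssquare$ for all fixed points), two automorphisms in the same class produce Lie algebras with the same invariant $[\Delta]\in\ssquare$ — in the notation of Theorem \ref{thm split pair -> non-split pair}, $\ss'\cong L(-\Lambda,1)$ with $[\Lambda]=[K(x,x)/2]$, whence $[\Delta]$ is determined by the class — so the map is well defined; it is injective because the invariant $[K(x,x)]\in\ssquare$ can be recovered from $\ss'$ via part \ref{pp obtainable cond b} of Proposition \ref{pp obtainable}; and it is surjective onto the isomorphism classes of obtainable non-split three-dimensional simple Lie algebras by the very definition of "obtainable." The only genuinely delicate point — and the one I would treat most carefully — is checking that this first map is well defined on equivalence classes and that no information beyond $[K(x,x)]\in\ssquare$ survives; concretely, one must verify that the value $a\in k^*$ appearing as $\sigma(e)=af$ in the proof of Theorem \ref{thm split pair -> non-split pair} enters the structure constants of $\ss'$ only through its class $[a]=[\Lambda]\in\square$ together with the finer datum $[K(x,x)]\in\ssquare$, so that $\ss'\cong L(-\Lambda,1)$ depends on $\sigma$ exactly through the equivalence class. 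Once that is pinned down, composing the three bijections yields the stated chain of correspondences.
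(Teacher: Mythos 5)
Your proposal is correct and follows essentially the route the paper intends: the theorem is stated as a summary with no separate proof, and the three correspondences are exactly those assembled from Theorem \ref{thm split pair -> non-split pair}, Lemma \ref{L(D,D) split SSI somme Deux Carrés}, Proposition \ref{pp obtainable} and Proposition \ref{pp subgroup of the Brauer group}, as you do. Your care over the identity element (the split algebra corresponding to $1\in\ssquare$ and to $[\mathcal{H}(\sl(2,k))]\in H(k)$) and over the fact that $[K(x,x)/2]=[K(x,x)]\in\ssquare$ because $2$ is a sum of two squares is exactly the right bookkeeping.
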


\section{Criteria to be obtainable over local and global fields} \label{section local and global fields}

In this section, after recalling definitions, we characterise which non-split three-dimensional simple Lie algebras are obtainable over local and global fields in terms of the Hilbert symbol and the Legendre symbol (see \cite{Vigneras80} for details about quaternions algebras over local and global fields). We also give examples of obtainable and unobtainable non-split three-dimensional simple Lie algebras over the field of rationals.

\begin{df}
Let $\alpha,\beta \in k^*$. We define the Hilbert symbol $(\alpha,\beta)\in \lbrace \pm 1 \rbrace$ as follows:
$$(\alpha,\beta):=\left\{
    \begin{array}{ll}
    	1 & \mbox{if the binary form } <\alpha,\beta> \mbox{ represents } 1, \\
        -1 & \mbox{otherwise.}
    \end{array}
\right.$$
\end{df}

\begin{pp} \label{pp L(a,b) split ssi (-a,-b)=1}
Let $\alpha,\beta \in k^*$. The Lie algebra $L(\alpha,\beta)$ is split if and only if $(-\alpha,-\beta)=1$.
\end{pp}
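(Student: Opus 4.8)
The plan is to pass to quadratic forms and reduce the statement to one short elementary identity. By Corollary~\ref{cor 3DSLA iso iff Killing iso}, $L(\alpha,\beta)$ is split if and only if its Killing form is isotropic, and that Killing form is $<-2\beta,-2\alpha,-2\alpha\beta>$. Since multiplying a quadratic form by the nonzero scalar $-\tfrac{1}{2}$ does not change its isotropic vectors, this form is isotropic if and only if $<\beta,\alpha,\alpha\beta>$ is isotropic. Hence it suffices to show that $<\beta,\alpha,\alpha\beta>$ is isotropic if and only if $(-\alpha,-\beta)=1$.

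The heart of the matter is the elementary claim: for all $a,b\in k^*$, the ternary form $<a,b,ab>$ is isotropic if and only if the binary form $<-a,-b>$ represents $1$. I would prove this by first noting that $<a,b,ab>$ is isotropic if and only if $-ab$ lies in the set of values of $<a,b>$ (split off the last variable; in the degenerate case where $<a,b>$ is itself isotropic it is universal, so it represents $-ab$ anyway), i.e. $ax^{2}+by^{2}=-ab$ for some $x,y\in k$, which are not both zero since $ab\neq 0$. Dividing this equation by $-ab$ and absorbing the square factors turns it into $-a(y/a)^{2}-b(x/b)^{2}=1$, that is, $<-a,-b>$ represents $1$; every step is reversible. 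Applying the claim with $(a,b)=(\beta,\alpha)$ and using that $<-\beta,-\alpha>$ and $<-\alpha,-\beta>$ are the same form, we get that $<\beta,\alpha,\alpha\beta>$ is isotropic if and only if $<-\alpha,-\beta>$ represents $1$, which is exactly the condition $(-\alpha,-\beta)=1$. Chaining the equivalences gives the Proposition.

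I expect the only delicate point — and the reason one cannot simply identify the two ternary forms — to be bookkeeping rather than a genuine obstacle: $<\beta,\alpha,\alpha\beta>$ has square discriminant while $<1,-\alpha,-\beta>$ does not in general, so these forms are usually not isometric, and the argument must be phrased purely in terms of isotropy, keeping track of the degenerate case in which $<-\alpha,-\beta>$ is isotropic (hence universal). An alternative, essentially equivalent route bypasses even this: by Proposition~\ref{equivalences quaternion alg} (applied to $L(-1,1)\cong\sl(2,k)$), $L(\alpha,\beta)$ is split if and only if the quaternion algebra $\bigl(\begin{smallmatrix}-\alpha,-\beta\\ \hline k\end{smallmatrix}\bigr)$ is isomorphic to $\bigl(\begin{smallmatrix}1,-1\\ \hline k\end{smallmatrix}\bigr)\cong M_2(k)$, i.e. is split, and one then invokes the classical equivalence (see \cite{LAM}) between splitting of $\bigl(\begin{smallmatrix}a,b\\ \hline k\end{smallmatrix}\bigr)$ and the binary form $<a,b>$ representing $1$.
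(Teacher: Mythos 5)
Your proof is correct and follows essentially the same route as the paper: reduce via Corollary~\ref{cor 3DSLA iso iff Killing iso} to the isotropy of the Killing form, then identify $<\beta,\alpha,\alpha\beta>$ as the pure part of the norm form of the quaternion algebra $\bigl(\begin{smallmatrix}-\alpha,-\beta\\ \hline k\end{smallmatrix}\bigr)$ and relate its isotropy to the Hilbert symbol. The only difference is that where the paper cites Theorem 2.7 of \cite{LAM} for the equivalence between isotropy of $<a,b,ab>$ and $<-a,-b>$ representing $1$, you prove it directly by the (correct) scaling manipulation, including the needed degenerate case where $<a,b>$ is itself isotropic; your ``alternative route'' at the end is exactly the paper's argument.
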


\begin{proof}
By Corollary \ref{cor 3DSLA iso iff Killing iso} the Lie algebra $L(\alpha,\beta)$ is split if and only $<-2\beta,-2\alpha,-2\alpha\beta>$ is isotropic. Since the norm form of the imaginary part of the quaternion algebra $\left(\frac{-\beta,-\alpha}{k}\right)$ is isometric to $<-(-\beta),-(-\alpha),(-\alpha)(-\beta)>$, then by Theorem 2.7 p.58 of \cite{LAM} we have that $L(\alpha,\beta)$ is split if and only if $(-\alpha,-\beta)=1$.
\end{proof}

We now introduce the Legendre symbol.

\begin{df}
For an odd prime $p$ and $a \in \mathbb{Z}$, the Legendre symbol is defined by:
$$\left(\frac{a}{p}\right):=\left\{
    \begin{array}{ll}
    	0 & \mbox{if p divides a,}  \\
        1 & \mbox{if a is a square modulo p,} \\
        -1 & \mbox{otherwise.}
    \end{array}
\right.$$
\end{df}

\begin{rem}
There is a formula for the Legendre symbol:
$$\left(\frac{a}{p}\right)=a^{\frac{p-1}{2}} \pmod p.$$
\end{rem}

If $k_{\PP}$ is a non-dyadic local field, we denote by $\overline{k_{\PP}}$ its residue class field and denote by $v_{\PP}$ its valuation. Recall that for any prime $p$, the fields $\mathbb{Q}_p$ and $\mathbb{F}_p((t))$ are examples of local fields whose residue class field is isomorphic to $\mathbb{F}_p$. The Hilbert symbol over a local field can be re-written in terms of the Legendre symbol as follows. Let $\alpha,\beta \in k_{\PP}^*$. We note $a=v_{\PP}(\alpha)$ and $b=v_{\PP}(\beta)$. By Corollary p.211 of \cite{Serre68} we have
$$(\alpha,\beta)=\Big((-1)^{ab}\frac{\alpha^b}{\beta^a}\Big)^{\frac{|\overline{k_{\PP}}|-1}{2}}.$$
In particular if $|\overline{k_{\PP}}|$ is prime, we have
\begin{equation}\label{equation hilbert in terms of legendre}
(\alpha,\beta)=\left(\frac{(-1)^{ab}\frac{\alpha^b}{\beta^a}}{|\overline{k_{\PP}}|}\right).
\end{equation}

By Proposition \ref{equivalences quaternion alg} and by Theorem 2.2 p.152 of \cite{LAM}, there is up to isomorphism only one non-split three-dimensional simple Lie algebra over $k_{\PP}$. The standard model is $L(-u,-\pi)$ where $u,\pi \in k_{\PP}$, $v_{\PP}(u)=0$, $\bar{u}\notin \overline{k_{\PP}}^{*2}$ and $v_{\PP}(\pi)=1$.
 
\begin{pp} \label{critere local field}
Let $k_{\PP}$ be a non-dyadic local field and $\overline{k_{\PP}}$ be its residue class field. A non-split three-dimensional simple Lie algebra over $k_{\PP}$ is obtainable if and only if $|\overline{k_{\PP}}| \equiv 3 \pmod 4$.
\end{pp}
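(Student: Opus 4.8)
The plan is to use the characterisation of obtainability from Proposition~\ref{pp obtainable}, namely that a non-split three-dimensional simple Lie algebra is obtainable if and only if its Killing form represents $-2$. By Corollary~\ref{cor 3DSLA iso iff Killing iso} a non-split Lie algebra over $k_\PP$ has anisotropic Killing form, and by the uniqueness statement recalled just above the proposition, this Killing form is (up to isometry) the Killing form of $L(-u,-\pi)$, that is $<-2(-\pi),-2(-u),-2(-u)(-\pi)> = <2\pi,2u,-2u\pi>$. So I must decide for which residue fields the form $<2\pi,2u,-2u\pi>$ represents $-2$, equivalently (dividing through by $-2$) for which residue fields $<-\pi,-u,u\pi>$ represents $1$. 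A cleaner route is to invoke Proposition~\ref{pp L(a,b) split ssi (-a,-b)=1} and the isometry noted in the proof of Proposition~\ref{pp obtainable}: the Killing form represents $-2$ iff $L(-u,-\pi)\cong L(-\Delta,-\Delta)$ for some $\Delta$, and (by Lemma~\ref{L(D,D) split SSI somme Deux Carrés} combined with Proposition~\ref{equivalences quaternion alg}) $L(-\Delta,-\Delta)\cong L(-\Delta,1)$ with associated quaternion algebra $\bigl(\begin{smallmatrix}-1,\Delta\\ \hline k_\PP\end{smallmatrix}\bigr)$; this is non-split iff $(-1,-\Delta)=-1$, i.e. iff $(-1,\Delta)=-1$ after adjusting by whether $-1$ is a norm. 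Thus obtainability of the unique non-split algebra over $k_\PP$ amounts to: there exists $\Delta\in k_\PP^*$ with $\bigl(\begin{smallmatrix}-1,\Delta\\ \hline k_\PP\end{smallmatrix}\bigr)$ isomorphic to the unique division quaternion algebra over $k_\PP$. Since every quaternion algebra over $k_\PP$ with $\alpha$ fixed and $\beta$ ranging over $k_\PP^*/k_\PP^{*2}$ hits either only the split class or both classes, the question reduces to whether $\bigl(\begin{smallmatrix}-1,\beta\\ \hline k_\PP\end{smallmatrix}\bigr)$ is ever non-split, i.e. whether $-1$ is a non-square in $k_\PP$ modulo norms — precisely the condition that $-1\notin k_\PP^{*2}$ is not a sum of two squares, which over a local field is governed by the residue field.

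Concretely, I would compute the Hilbert symbol $(-1,\pi)$ using formula~(\ref{equation hilbert in terms of legendre}). With $\alpha=-1$ (so $v_\PP(-1)=0$, i.e. $a=0$) and $\beta=\pi$ (so $b=1$), the formula gives
$$(-1,\pi)=\left(\frac{(-1)^{0}\cdot(-1)^{1}/\pi^{0}}{|\overline{k_\PP}|}\right)=\left(\frac{-1}{|\overline{k_\PP}|}\right).$$
By the supplementary law for the Legendre symbol, $\left(\frac{-1}{q}\right)=1$ if $q\equiv 1\pmod 4$ and $-1$ if $q\equiv 3\pmod 4$ (for $q$ an odd prime; in general for the residue field one uses $(-1)^{(|\overline{k_\PP}|-1)/2}$). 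Hence $\bigl(\begin{smallmatrix}-1,\pi\\ \hline k_\PP\end{smallmatrix}\bigr)$ is the division algebra exactly when $|\overline{k_\PP}|\equiv 3\pmod 4$, in which case taking $\Delta=\pi$ exhibits the unique non-split algebra $L(-\pi,-\pi)\cong L(-\pi,1)$ as obtainable by Proposition~\ref{pp obtainable}\ref{pp obtainable cond c}. Conversely, if $|\overline{k_\PP}|\equiv 1\pmod 4$, then $-1$ is a square in $\overline{k_\PP}$ hence in $k_\PP$, so $k_\PP^*_{-1}=k_\PP^*$, every element is a sum of two squares, and by Lemma~\ref{L(D,D) split SSI somme Deux Carrés} every algebra of the form $L(-\Delta,1)$ is split; since obtainable non-split algebras are exactly those isomorphic to some $L(-\Delta,-\Delta)\cong L(-\Delta,1)$ (Proposition~\ref{pp obtainable}\ref{pp obtainable cond c}), there are no obtainable non-split algebras over $k_\PP$.

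I would organise the write-up as: (1) recall that by Proposition~\ref{pp obtainable}\ref{pp obtainable cond c} obtainable non-split algebras over $k_\PP$ are precisely those isomorphic to $L(-\Delta,-\Delta)$ for some $\Delta\in k_\PP^*$ with $[\Delta]\neq 1\in\ssquare$; (2) note that since there is up to isomorphism only one non-split three-dimensional simple Lie algebra over $k_\PP$, obtainability is equivalent to the existence of some non-split $L(-\Delta,-\Delta)$ over $k_\PP$, equivalently (by Lemma~\ref{L(D,D) split SSI somme Deux Carrés}) to $\ssquare\neq\lbrace 1\rbrace$, i.e. to the existence of an element of $k_\PP^*$ which is not a sum of two squares; (3) show this happens iff $|\overline{k_\PP}|\equiv 3\pmod 4$: if $|\overline{k_\PP}|\equiv 1\pmod 4$ then $-1\in k_\PP^{*2}$ and every element is a sum of two squares (by the identity displayed before Theorem~\ref{thm split pair -> non-split pair}), while if $|\overline{k_\PP}|\equiv 3\pmod 4$ then the computation $(-1,\pi)=\left(\frac{-1}{|\overline{k_\PP}|}\right)=-1$ via~(\ref{equation hilbert in terms of legendre}) shows $<1,1>$ does not represent $\pi$, so $\pi$ is not a sum of two squares. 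I expect the main subtlety to be step (2)–(3): making sure that "$-1$ is a square in $k_\PP$" is correctly equated with "$|\overline{k_\PP}|\equiv 1\pmod 4$" for a general non-dyadic local field (not just $\mathbb{Q}_p$), which follows because a unit is a square in $k_\PP$ iff its residue is a square in $\overline{k_\PP}$, and handling the degenerate case $-1\in k_\PP^{*2}$ where $\ssquare$ collapses to the trivial group and there is simply nothing to obtain. Everything else is a direct appeal to the already-established propositions.
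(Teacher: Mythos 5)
Your proposal is correct and follows essentially the same route as the paper: both reduce obtainability to the non-triviality of $\quot{k_{\PP}^*}{{k_{\PP}^*}_{-1}}$ via Proposition \ref{pp obtainable} combined with the uniqueness of the non-split three-dimensional simple Lie algebra over $k_{\PP}$, and then show this group is trivial if and only if $|\overline{k_{\PP}}|\equiv 1\pmod 4$. The only (harmless) difference is in that last step: the paper's Lemma \ref{lm -1 square local field} invokes the non-degeneracy of the Hilbert symbol over local fields, whereas you compute $(-1,\pi)$ explicitly via the Legendre-symbol formula \eqref{equation hilbert in terms of legendre} and use the sum-of-two-squares identity when $-1$ is a square --- an equally valid, slightly more self-contained variant.
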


\begin{proof}
We first need the following Lemma
\begin{lm} \label{lm -1 square local field}
We have $-1 \in k_{\PP}^{*2}$ if and only if $|\overline{k_{\PP}}| \equiv 1 \pmod 4$ if and only if $\quot{k_{\PP}^*}{{k_{\PP}^*}_{-1}} =\lbrace 1 \rbrace$.
\end{lm}

\begin{proof}
We have $-1 \in k_{\PP}^{*2}$ if and only if $|\overline{k_{\PP}}| \equiv 1 \pmod 4$ by Corollary 2.6 p.154 of \cite{LAM}. We have $\quot{k_{\PP}^*}{{k_{\PP}^*}_{-1}}  =\lbrace 1 \rbrace$ if and only if the quadratic form $<1,1,-\Delta>$ is isotropic for all $\Delta \in k_{\PP}^*$. The quadratic form $<1,1,-\Delta>$ is isotropic for all $\Delta \in k_{\PP}^*$ if and only if $(-1,\Delta)=1$ for all $\Delta \in k_{\PP}^*$. However, since $k_{\PP}$ is a local field, $(-1,\Delta)=1$ for all $\Delta \in k_{\PP}^*$ if and only if $-1$ is a square by Proposition 7 p.208 of \cite{Serre68}.
\end{proof}
Let $\ss$ be a non-split three-dimensional simple Lie algebra. If $|\overline{k_{\PP}}| \equiv 1 \pmod 4$, then by the previous lemma and the proposition \ref{pp obtainable} there is no automorphism of Cartan type of $\sl(2,k)$. If $|\overline{k_{\PP}}| \equiv 3 \pmod 4$, then by Lemma \ref{lm -1 square local field}, there exists an automorphism of Cartan type $\sigma$ of $\sl(2,k_{\PP})$. Let $\ss'$ be the non-split three-dimensional simple Lie algebra associated to $(\sl(2,k_{\PP}),\sigma)$ by the construction of Section \ref{Section 3DLA construction}. Since there is up to isomorphism one non-split three-dimensional simple Lie algebra over $k_{\PP}$, the Lie algebras $\ss$ and $\ss'$ are isomorphic and so $\ss$ is obtainable.
\end{proof}

Recall that the global fields are the number fields and the finite extensions of the function fields $\mathbb{F}_q(t)$. Using the Hasse-Minkowski theorem (\cite{LAM} p.170) and the previous proposition we obtain the following characterisation of obtainable non-split three-dimensional simple Lie algebras over global fields.

\begin{pp} \label{thm global field}
Let $k$ be a global field. A non-split three-dimensional simple Lie algebra $\ss$ is obtainable if and only if it satisfies the following conditions:
\begin{enumerate}[label=\alph*)]
\item over every non-archimedean completion $k_{\PP}$ of $k$ such that $|\overline{k_{\PP}}| \equiv 1 \pmod 4$, the Killing form of $\ss$ is isotropic,
\item the Killing form of $\ss$ represents $-2$ over all dyadic completions.
\end{enumerate}
\end{pp}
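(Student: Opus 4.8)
The plan is to reduce obtainability to a statement about the Killing form representing $-2$, and then to localise that statement with the Hasse-Minkowski theorem, analysing each kind of completion in turn. Concretely: by Proposition \ref{pp obtainable}, the non-split Lie algebra $\ss$ is obtainable if and only if its Killing form $K$ represents $-2$ over $k$. Since $K$ is nondegenerate and $-2\neq 0$, $K$ represents $-2$ over a field exactly when the quaternary form $K\perp<2>$ is isotropic over that field; by the Hasse-Minkowski theorem (\cite{LAM} p.170) the latter holds over the global field $k$ if and only if it holds over every completion $k_v$ of $k$. So the task reduces to showing that ``$K$ represents $-2$ over $k_v$ for all $v$'' is equivalent to conditions (a) and (b).

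I would then run through the completions. If $k_v$ is archimedean there is nothing to check: over $\mathbb{C}$ every nonzero quadratic form represents $-2$, and over $\mathbb{R}$ the form $K$ has $disc(K)=[-2]$, hence signature $(2,1)$ or $(0,3)$; in the first case $K$ is isotropic and in the second it is negative definite, so $K$ represents $-2$ either way. If $k_\PP$ is dyadic its residue field has order a power of $2$, so $|\overline{k_\PP}|\not\equiv 1 \pmod 4$, and the requirement ``$K$ represents $-2$ over $k_\PP$'' is left untouched — this is exactly condition (b). If $k_\PP$ is non-archimedean and non-dyadic, then $\ss\otimes_k k_\PP$ is again a three-dimensional simple Lie algebra whose Killing form is $K$ viewed over $k_\PP$; when it is split, $K$ is isotropic over $k_\PP$ and hence represents $-2$, and when it is non-split, Proposition \ref{critere local field} together with Proposition \ref{pp obtainable} applied over $k_\PP$ shows that $K$ represents $-2$ over $k_\PP$ if and only if $|\overline{k_\PP}|\equiv 3 \pmod 4$. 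Using Corollary \ref{cor 3DSLA iso iff Killing iso} to rephrase ``non-split over $k_\PP$'' as ``$K$ anisotropic over $k_\PP$'', this says that $K$ fails to represent $-2$ over a non-dyadic non-archimedean $k_\PP$ precisely when $|\overline{k_\PP}|\equiv 1\pmod 4$ and $K$ is anisotropic over $k_\PP$; demanding that this never occurs is exactly condition (a), and since dyadic completions never satisfy $|\overline{k_\PP}|\equiv 1\pmod 4$ it is harmless that (a) is phrased for all non-archimedean completions.

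Assembling the three cases, $K$ represents $-2$ over every completion of $k$ if and only if (a) and (b) hold, and by the first paragraph this is equivalent to $\ss$ being obtainable, which proves the proposition. The only delicate point is the analysis at the non-dyadic finite places: one must notice that the local obstruction $|\overline{k_\PP}|\equiv 1\pmod 4$ is effective only when $\ss$ is non-split over $k_\PP$, so that it converts into a condition on the isotropy of the Killing form, while simultaneously checking that the archimedean places contribute nothing and that the dyadic places genuinely resist further simplification and must be recorded as in (b).
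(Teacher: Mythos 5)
Your proposal is correct and follows essentially the same route as the paper: reduce obtainability to $K$ representing $-2$ via Proposition \ref{pp obtainable}, pass to the quaternary form $K\perp\langle 2\rangle$ and apply Hasse--Minkowski, then check that the archimedean and residue-characteristic-$\equiv 3 \pmod 4$ places impose no condition while the $\equiv 1 \pmod 4$ places reduce to isotropy of $K$ and the dyadic places remain as stated. The only (harmless) difference is cosmetic: you argue the real place via the signature of $K$ itself rather than of $K\perp\langle 2\rangle$, and you organise the non-dyadic local analysis by first splitting on whether $K$ is isotropic over $k_{\PP}$.
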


\begin{rem}
Condition $b)$ is automatically satisfied if $k$ is of characteristic not two and a finite extension of a function field.
\end{rem}

\begin{proof}
Using Proposition \ref{pp obtainable}, the Lie algebra $\ss$ is obtainable if and only if its Killing form $K$ represents $-2$. Moreover, $K$ represents $-2$ if and only if the quadratic form $K\perp<2>$ is isotropic. By the Hasse-Minkowski theorem (\cite{LAM} p.170) we know that $K\perp<2>$ is isotropic over $k$ if and only if $K\perp<2>$ is isotropic over every completion $k_{\PP}$ of $k$ (including the dyadic completions). We now show that this condition is automatically satisfied for archimedean completions and non-archimedean completions $k_{\PP}$ such that $|\overline{k_{\PP}}| \equiv 3 \pmod 4$.
\vspace{0.2cm}

If $|\cdot|_{\PP}$ is an archimedean absolute value on $k$ then $k_{\PP}$ is either $\mathbb{R}$ or $\mathbb{C}$. If $k_{\PP}=\mathbb{C}$, the quadratic form $K\perp<2>$ is isotropic and if $k_{\PP}=\mathbb{R}$, the signature of $K\perp <2>$ is indefinite and then isotropic.
\vspace{0.2cm}

Using Propositions \ref{pp obtainable} and \ref{critere local field} we have that over every non-archimedean completion $k_{\PP}$ of $k$ such that $|\overline{k_{\PP}}| \equiv 3 \pmod 4$ the quadratic form $K\perp <2>$ is isotropic. Finally, using again Proposition \ref{critere local field} we have that over every non-archimedean completion $k_{\PP}$ of $k$ such that $|\overline{k_{\PP}}| \equiv 1 \pmod 4$ the quadratic form $K\perp <2>$ is isotropic if and only if $K$ is isotropic. This complete the proof of the proposition.
\end{proof}

This result can be re-expressed as follows.

\begin{cor}
Let $k$ be a global field. Let $L(\alpha,\beta)$ be a non-split three-dimensional simple Lie algebra, where $\alpha,\beta \in k^*$. The Lie algebra $L(\alpha,\beta)$ is obtainable if and only if it satisfies the following conditions:
\begin{enumerate}[label=\alph*)]
\item over every non-archimedean non-dyadic completion $k_{\PP}$ of $k$ such that $|\overline{k_{\PP}}| \equiv 1 \pmod 4$ and such that $v_{\PP}(\alpha)$ or $v_{\PP}(\beta)$ is non-zero we have 
\begin{equation} \label{globalcondition}
(\alpha,\beta)_{k_{\PP}}=1
\end{equation}
where $v_{\PP}$ is the valuation associated to $k_{\PP}$.
\item the quadratic form $<\alpha,\beta,\alpha\beta,-1>$ is isotropic over all dyadic completions.
\end{enumerate}
\end{cor}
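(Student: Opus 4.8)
The plan is to specialise Proposition \ref{thm global field} to $\ss = L(\alpha,\beta)$ and rewrite its two conditions in terms of the Hilbert symbol. Recall that the Killing form of $L(\alpha,\beta)$ is $K = \langle -2\beta,-2\alpha,-2\alpha\beta\rangle$, and that by Proposition \ref{pp obtainable} obtainability is equivalent to $K$ representing $-2$; Proposition \ref{thm global field} then says that $L(\alpha,\beta)$ is obtainable if and only if a) $K$ is isotropic over every non-archimedean completion $k_{\PP}$ with $|\overline{k_{\PP}}| \equiv 1 \pmod 4$, and b) $K$ represents $-2$ over every dyadic completion. The entire proof is a translation of these two conditions.

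For condition b): as in the proof of Proposition \ref{thm global field}, $K$ represents $-2$ over $k_{\PP}$ if and only if $K \perp \langle 2\rangle$ is isotropic over $k_{\PP}$, and scaling the quaternary form $\langle -2\beta,-2\alpha,-2\alpha\beta,2\rangle$ by $-\tfrac{1}{2}$ (which does not affect isotropy) turns it into $\langle \alpha,\beta,\alpha\beta,-1\rangle$; this is exactly condition b) of the corollary. For condition a): first I would note that a dyadic completion has residue field of characteristic $2$, so $|\overline{k_{\PP}}|$ is a power of $2$ and is never $\equiv 1 \pmod 4$; hence the completions occurring in a) are precisely the non-dyadic non-archimedean ones with $|\overline{k_{\PP}}| \equiv 1 \pmod 4$. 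Over such a $k_{\PP}$, $K$ is isotropic if and only if $L(\alpha,\beta)$ is split over $k_{\PP}$ (Corollary \ref{cor 3DSLA iso iff Killing iso}), which by Proposition \ref{pp L(a,b) split ssi (-a,-b)=1} holds if and only if $(-\alpha,-\beta)_{k_{\PP}} = 1$. Since $|\overline{k_{\PP}}| \equiv 1 \pmod 4$ forces $-1 = c^2$ for some $c \in k_{\PP}^*$ (Lemma \ref{lm -1 square local field}), we have $-\alpha = c^2\alpha$ and $-\beta = c^2\beta$, and because the Hilbert symbol is unchanged when its arguments are multiplied by squares, $(-\alpha,-\beta)_{k_{\PP}} = (\alpha,\beta)_{k_{\PP}}$. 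Finally, when $v_{\PP}(\alpha) = v_{\PP}(\beta) = 0$, the Corollary p.211 of \cite{Serre68} (applied with $a = b = 0$) gives $(\alpha,\beta)_{k_{\PP}} = 1$ unconditionally; so the Hilbert condition is automatic at those places and one may restrict to the finitely many $k_{\PP}$ where $v_{\PP}(\alpha)$ or $v_{\PP}(\beta)$ is nonzero. This is precisely condition a) of the corollary, and combining the two translations finishes the proof.

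The routine parts are the scaling identities for quadratic forms and the bookkeeping over completions. The step I would treat with the most care is the passage from $(-\alpha,-\beta)_{k_{\PP}}$ to $(\alpha,\beta)_{k_{\PP}}$ together with the reduction to the places dividing $\alpha$ or $\beta$: the first relies on $-1$ being a square at exactly the places under test, which the congruence $|\overline{k_{\PP}}| \equiv 1 \pmod 4$ supplies through Lemma \ref{lm -1 square local field}, and the second on the standard vanishing of the Hilbert symbol of two units at a non-dyadic place. I do not expect any genuine obstacle beyond recording these observations.
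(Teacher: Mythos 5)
Your proposal is correct and follows essentially the same route as the paper: specialise Proposition \ref{thm global field}, translate condition a) via Proposition \ref{pp L(a,b) split ssi (-a,-b)=1} and the fact that $-1$ is a square at the places with $|\overline{k_{\PP}}| \equiv 1 \pmod 4$ (Lemma \ref{lm -1 square local field}), and translate condition b) by scaling $K\perp\langle 2\rangle$. You are in fact somewhat more thorough than the paper's own two-line proof, which leaves implicit both the scaling step for condition b) and the reduction to places where $v_{\PP}(\alpha)$ or $v_{\PP}(\beta)$ is nonzero (the latter justified in the paper only later, in the proof over $\mathbb{Q}$, via the unit-unit vanishing of the Hilbert symbol that you also invoke).
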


\begin{proof}
Let $K$ be the Killing form of $L(\alpha,\beta)$. By Remark \ref{pp L(a,b) split ssi (-a,-b)=1}, the quadratic form $K$ is isotropic over a non-archimedean completion $k_{\PP}$ of $k$ such that $|\overline{k_{\PP}}| \equiv 1 \pmod 4$ if and only if $(-\alpha,-\beta)_{k_{\PP}}=1$. But since $-1$ is a square in $k_{\PP}$ by Lemma \ref{lm -1 square local field} this is equivalent to have $(\alpha,\beta)_{k_{\PP}}=1$.
\end{proof}

Over $\mathbb{Q}$ this implies the following result:

\begin{pp} \label{ex general sur Q}
Let $L(\alpha,\beta)$ be a non-split three-dimensional simple Lie algebra over $\mathbb{Q}$, where $\alpha,\beta \in \mathbb{Q}^*$. The Lie algebra $L(\alpha,\beta)$ is obtainable if and only if for every prime $p\equiv 1 \pmod 4$ such that $v_p(\alpha)$ or $v_p(\beta)$ is non-zero we have
$$\left(\frac{\frac{\alpha^{v_p(\beta)}}{\beta^{v_p(\alpha)}}}{p}\right)=1 \pmod p$$
where $v_p$ is the p-adic valuation associated to the prime $p$.
\end{pp}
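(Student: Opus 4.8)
The plan is to specialise the previous corollary to the case $k=\mathbb{Q}$ and simplify its two conditions. The main point is that condition b) of the corollary — that the quadratic form $<\alpha,\beta,\alpha\beta,-1>$ is isotropic over the dyadic completion $\mathbb{Q}_2$ — is in fact automatic here. Indeed, $<\alpha,\beta,\alpha\beta,-1>$ has discriminant $[-\alpha^2\beta^2]=[-1]$, so over $\mathbb{Q}_2$ it is a quaternary form of discriminant $[-1]$; such a form represents every element of $\mathbb{Q}_2^*$ (a quaternary form over a local field with this discriminant is universal, being isotropic unless it is the anisotropic norm form of the quaternion division algebra, whose discriminant is $[1]$, not $[-1]$). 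In particular it represents $0$, so condition b) holds vacuously. The same remark shows, as noted, that over archimedean places $\mathbb{R}$ the form $K\perp<2>$ is indefinite hence isotropic, so only the non-dyadic primes $p\equiv 1\pmod 4$ can obstruct obtainability.

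Next I would translate condition a) of the corollary into the Legendre symbol. Fix a prime $p\equiv 1\pmod 4$ at which $a:=v_p(\alpha)$ or $b:=v_p(\beta)$ is non-zero. By the corollary we need $(\alpha,\beta)_{\mathbb{Q}_p}=1$. Now $\mathbb{Q}_p$ is a non-dyadic local field with residue field $\mathbb{F}_p$, so $|\overline{k_{\PP}}|=p$ is prime and Equation (\ref{equation hilbert in terms of legendre}) applies:
$$(\alpha,\beta)_{\mathbb{Q}_p}=\left(\frac{(-1)^{ab}\frac{\alpha^{b}}{\beta^{a}}}{p}\right).$$
Since $p\equiv 1\pmod 4$ we have $\left(\frac{-1}{p}\right)=1$, so the factor $(-1)^{ab}$ inside the symbol can be dropped (the Legendre symbol is multiplicative and $\left(\frac{(-1)^{ab}}{p}\right)=1$), leaving exactly
$$(\alpha,\beta)_{\mathbb{Q}_p}=\left(\frac{\frac{\alpha^{v_p(\beta)}}{\beta^{v_p(\alpha)}}}{p}\right).$$
Here the argument $\frac{\alpha^{b}}{\beta^{a}}$ has $p$-adic valuation $ab-ab=0$, so it is a $p$-adic unit and its image in $\mathbb{F}_p^*$ is well-defined, making the Legendre symbol meaningful (and equal to $1$ or $-1$, never $0$). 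Putting the two steps together: $L(\alpha,\beta)$ is obtainable iff it is obtainable at every non-dyadic prime, iff for every $p\equiv 1\pmod 4$ with $v_p(\alpha)$ or $v_p(\beta)$ non-zero the above symbol equals $1$, which is precisely the asserted criterion.

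The step I expect to require the most care is the vanishing of the dyadic obstruction: one must be sure that the discriminant computation $disc<\alpha,\beta,\alpha\beta,-1>=[-1]$ together with the classification of quaternary forms over $\mathbb{Q}_2$ really forces isotropy — equivalently, that the Killing form $K=<-2\beta,-2\alpha,-2\alpha\beta>$ always represents $-2$ over $\mathbb{Q}_2$. An alternative, perhaps cleaner, route is to invoke directly that over any local field a ternary form $q$ with $disc(q)=[-2]$ represents $-2$: indeed $q\perp<2>$ then has discriminant $[-1]$ and a quaternary form of discriminant $[-1]$ over a local field is isotropic, since the unique anisotropic quaternary form is the norm form of the quaternion division algebra, which has trivial discriminant. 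I would state this as a one-line lemma (or cite it from \cite{LAM}) and then the rest is the bookkeeping above. The reciprocity/Hasse-Minkowski content has already been absorbed into Proposition \ref{thm global field} and its corollary, so no new global input is needed.
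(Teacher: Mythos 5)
Your proposal is correct and follows essentially the same route as the paper: reduce to the preceding corollary, kill the dyadic condition by noting that $\langle\alpha,\beta,\alpha\beta,-1\rangle$ has non-square discriminant $[-1]$ over $\mathbb{Q}_2$ and is therefore isotropic, and rewrite the Hilbert symbol at primes $p\equiv 1\pmod 4$ via Equation \eqref{equation hilbert in terms of legendre}. Your explicit justification that the factor $(-1)^{ab}$ drops because $\left(\frac{-1}{p}\right)=1$ for $p\equiv 1\pmod 4$ is a detail the paper leaves implicit, but the argument is the same.
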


\begin{rem}
For fixed $\alpha,\beta \in \mathbb{Q}^*$, the number of primes $p$ such that $v_p(\alpha)$ or $v_p(\beta)$ is non-zero is finite.
\end{rem}

\begin{proof}
The dyadic completion of $\mathbb{Q}$ is $\mathbb{Q}_2$ and
$$disc(<\alpha,\beta,\alpha\beta,-1>_{\mathbb{Q}_2})=-1 \notin \mathbb{Q}_2^{2*} $$
by Corollary p.40 in \cite{Cassels78}. Hence, $<\alpha,\beta,\alpha\beta,-1>_{\mathbb{Q}_2}$ is isotropic by Lemma 2.6 p.59 of \cite{Cassels78} and so the Killing form of $L(\alpha,\beta)$ represents $-2$ over $\mathbb{Q}_2$. We know that the non-archimedean completions of $\mathbb{Q}$ are the p-adic fields $\mathbb{Q}_p$ and the residue class field of $\mathbb{Q}_p$ is isomorphic to $\mathbb{F}_p$. Then, $|\overline{\mathbb{Q}_p}| \equiv 1 \pmod 4$ if and only if $p\equiv 1 \pmod 4$. If $p\equiv 1 \pmod 4$, from Equation \eqref{equation hilbert in terms of legendre}, we have
$$(\alpha,\beta)_{k_{\PP}}=\left(\frac{\frac{\alpha^{v_p(\beta)}}{\beta^{v_p(\alpha)}}}{p}\right).$$
In particular, if $v_p(\alpha)=0$ and $v_p(\beta)=0$, then the condition \eqref{globalcondition} is automatically satisfied. 
\end{proof}

Here are some examples of obtainable and unobtainable non-split three-dimensional simple Lie algebras over the field of rationals using Proposition \ref{ex general sur Q}.

\begin{ex}\label{examples over Q} Suppose that $k=\mathbb{Q}$.
\begin{enumerate}[label=\alph*)]
\item If $\alpha,\beta >0$, then $(-\alpha,-\beta)=-1$ and so $L(\alpha,\beta)$ is non-split by Proposition \ref{pp L(a,b) split ssi (-a,-b)=1}. In particular, the Lie algebras $L(2,3)$, $L(2,5)$ and $L(3,25)$ are non-split. The Lie algebra $L(2,3)$ is obtainable since there is no prime $p \equiv 1 \pmod 4$ such that $v_p(2)$ or $v_p(3)$ is non-zero. The Lie algebra $L(2,5)$ is unobtainable since for the prime $p=5$, we have $v_5(2)=0$, $v_5(5)=1$ and
$$\left(\frac{\frac{2^{v_5(5)}}{5^{v_5(2)}}}{5}\right)=2^2=-1 \pmod 5.$$
The Lie algebra $L(3,25)$ is obtainable since for the prime $p=5$, we have $v_5(3)=0$, $v_5(25)=2$ and
$$\left(\frac{\frac{3^{v_5(25)}}{25^{v_5(3)}}}{5}\right)=9^2=1 \pmod 5.$$
\item Using Proposition \ref{equivalences quaternion alg} and Example 2.17 p.63 of \cite{LAM} we have that the Lie algebra $L(3,-5)$ is non-split. Since for the prime $p=5$, we have $v_5(3)=0$, $v_5(-5)=1$ and
$$\left(\frac{\frac{3^{v_5(-5)}}{(-5)^{v_5(3)}}}{5}\right)=3^2=-1 \pmod 5,$$
then $L(3,-5)$ is unobtainable.
\item Let $p$ be an odd prime. We know from Example 2.14 p.62 of \cite{LAM} and Proposition \ref{equivalences quaternion alg} that $L(1,-p)$ is non-split if and only if $p\equiv 3 \pmod 4$. If $p\equiv 3 \pmod 4$, then the non-split Lie algebra $L(1,-p)$ is obtainable since there is no prime $p' \equiv 1 \pmod 4$ such that $v_{p'}(1)$ or $v_{p'}(-p)$ is non-zero.
\item Let $p$ be an odd prime. We know from Example 2.15 p.62 of \cite{LAM} and Proposition \ref{equivalences quaternion alg} that $L(2,-p)$ is non-split if and only if $p\equiv 5 \pmod 8$ or $p\equiv 7 \pmod 8$. If $p\equiv 7 \pmod 8$, then the non-split Lie algebra $L(2,-p)$ is always obtainable since there is no prime $p' \equiv 1 \pmod 4$ such that $v_{p'}(2)$ or $v_{p'}(-p)$ is non-zero. If $p\equiv 5 \pmod 8$, the non-split Lie algebra $L(2,-p)$ is always unobtainable since for the prime $p$, we have $v_p(2)=0$, $v_p(-p)=1$ and
$$\left(\frac{2}{p}\right)=-1 \pmod p$$
by the Quadratic Reciprocity Law (see p.181 in \cite{LAM}). 
\end{enumerate}
\end{ex}

\footnotesize
\bibliographystyle{alpha}
\bibliography{involutionsofsl2}

\begin{thebibliography}{Lam05}

\bibitem[Cas78]{Cassels78}
John William~Scott Cassels.
\newblock {\em Rational quadratic forms}, volume~13 of {\em London Mathematical
  Society Monographs}.
\newblock Academic Press, Inc. [Harcourt Brace Jovanovich, Publishers],
  London-New York, 1978.

\bibitem[Jac58]{Jacobson1958}
Nathan Jacobson.
\newblock A note on three dimensional simple {L}ie algebras.
\newblock {\em J. Math. Mech.}, 7:823--831, 1958.

\bibitem[Lam05]{LAM}
Tsit~Yuen Lam.
\newblock {\em Introduction to quadratic forms over fields}, volume~67 of {\em
  Graduate Studies in Mathematics}.
\newblock American Mathematical Society, Providence, RI, 2005.

\bibitem[Mal92]{MALCOLMSON1992}
Peter Malcolmson.
\newblock Enveloping algebras of simple three-dimensional {L}ie algebras.
\newblock {\em J. Algebra}, 146(1):210--218, 1992.

\bibitem[Ser68]{Serre68}
Jean-Pierre Serre.
\newblock {\em Corps locaux}.
\newblock Hermann, Paris, 1968.
\newblock Deuxi\`eme \'{e}dition, Publications de l'Universit\'{e} de Nancago,
  No. VIII.

\bibitem[Vig80]{Vigneras80}
Marie-France Vign\'{e}ras.
\newblock {\em Arithm\'{e}tique des alg\`ebres de quaternions}, volume 800 of
  {\em Lecture Notes in Mathematics}.
\newblock Springer, Berlin, 1980.

\end{thebibliography}
\end{document}